\providecommand{\tabularnewline}{\\}
\numberwithin{equation}{section}
\numberwithin{figure}{section}
\numberwithin{table}{section}
\theoremstyle{plain}
\newtheorem{thm}{\protect\theoremname}[section]
\theoremstyle{definition}
\newtheorem{defn}[thm]{\protect\definitionname}
\theoremstyle{remark}
\newtheorem{notation}[thm]{\protect\notationname}
\theoremstyle{definition}
\newtheorem{example}[thm]{\protect\examplename}
\theoremstyle{remark}
\newtheorem{rem}[thm]{\protect\remarkname}
\theoremstyle{plain}
\newtheorem{prop}[thm]{\protect\propositionname}
\theoremstyle{plain}
\newtheorem{lem}[thm]{\protect\lemmaname}
\theoremstyle{plain}
\newtheorem{cor}[thm]{\protect\corollaryname}
\newcommand{\op}{
  \mathop{
    \vphantom{\bigoplus} 
    \mathchoice
      {\vcenter{\hbox{\resizebox{\widthof{$\displaystyle\bigoplus$}}{!}{$\boxplus$}}}}
      {\vcenter{\hbox{\resizebox{\widthof{$\bigoplus$}}{!}{$\boxplus$}}}}
      {\vcenter{\hbox{\resizebox{\widthof{$\scriptstyle\oplus$}}{!}{$\boxplus$}}}}
      {\vcenter{\hbox{\resizebox{\widthof{$\scriptscriptstyle\oplus$}}{!}{$\boxplus$}}}}
  }\displaylimits 
}
\renewcommand\ell{l}
\subjclass[2020]{Primary 14T15; Secondary 05B35, 05C30, 52B40}
\providecommand{\corollaryname}{Corollary}
\providecommand{\definitionname}{Definition}
\providecommand{\examplename}{Example}
\providecommand{\lemmaname}{Lemma}
\providecommand{\notationname}{Notation}
\providecommand{\propositionname}{Proposition}
\providecommand{\remarkname}{Remark}
\providecommand{\theoremname}{Theorem}
\begin{document}
\title[Biconvex Polytopes and Tropical Linear Spaces]{Biconvex Polytopes and Tropical Linear Spaces}
\author{Jaeho Shin}
\dedicatory{Dedicated to Bernd Sturmfels on the occasion of his 60th birthday}
\address{Department of Mathematical Sciences, Seoul National University, Gwanak-ro
1, Gwanak-gu, Seoul 08826, South Korea}
\email{j.shin@snu.ac.kr}
\keywords{biconvex polytope, tropical linear space, directed bigraph, gammoid,
logarithmic map, monomial map, matroid subdivision}
\begin{abstract}
A biconvex polytope is a classical and tropical convex hull of finitely
many points. Given a biconvex polytope, for each vertex of it we construct
a \emph{directed bigraph} and a gammoid so that the collection of
the base polytopes of those gammoids is a matroid subdivision of the
hypersimplex, thereby proving a biconvex polytope arises as a cell
of a tropical linear space. Our construction provides manually feasible
guidelines for subdividing the hypersimplex into base polytopes, without
resorting to computers. We work out the rank-$4$ case as a demonstration.
We also show there is an injection from the vertices of any $\left(k-1\right)$-dimensional
biconvex polytope into the degree-$(k-1)$ monomials in $k$ indeterminates.
\end{abstract}

\maketitle
\tableofcontents{}

\section*{Introduction}

Tropical geometry is geometry over exponents of algebraic expressions.
It is naturally equipped with a pair of ``logarithmized'' addition
and multiplication which is either $\left(\min,+\right)$ or $\left(\max,+\right)$.
Whichever to choose is a matter of preference, but we need to fix
one and our playground will be \emph{min-plus algebra} with $\left(\min,+\right)$.
In precise terms ``logarithmizing'' is \emph{tropicalizing}. Tropicalized
notions are delicate and often do not conform to our classical sense.
Tropical convexity and tropical linearity are two of such. We deepen
our understanding of them by investigating a classical and tropical
convex hull of finitely many points, which we call a \emph{biconvex
polytope}.\footnote{This is also called a \emph{polytrope}, but the ``r'' in it is apt
to be blown past and cause unnecessary confusion. We call it a \emph{biconvex
polytope} because not only is it clear, but the name says it all.}

\smallskip{}
We assume the reader is familiar with matroid theory. Starting from
scratch, we show the \emph{biconvexity} is a well-defined notion first.
Then we conduct face analysis of biconvex polytopes. We define a new
graph-theoretic notion of \emph{directed bigraph} as a directed graph
with a specific ordered bipartite structure. So, two same directed
graphs with different bipartite structures are distinguished. With
this we construct three correspondences as follows:
\begin{itemize}
\item a correspondence from the vertices of any biconvex polytope to directed
bigraphs, 
\item a correspondence from directed bigraphs to gammoids, and hence
\item a correspondence from the vertices of any biconvex polytope to gammoids.
\end{itemize}
In the second correspondence, one should a priori choose a pair of
a ground set and a partition. To obtain connected matroids as the
outcome it suffices to take a little care when choosing such a pair,
see Lemma \ref{lem:connected}. Therefore we may assume the outcome
is a collection of connected matroids.\smallskip{}

For a fixed biconvex polytope, we define a map from the edges of the
biconvex polytope to the subsets of the ground set, which we call
the \emph{combinatorial log map} by which the face structure of the
base polytope of each gammoid is completely described. We prove the
collection of the base polytopes of those gammoids is a matroid subdivision
of the hypersimplex. We will then immediately see that any biconvex
polytope arises as a cell of a tropical linear space.\smallskip{}

This theory is not cohomology-based which leads to its construction
being concise and elegant. Nonetheless, one can read off and translate
back and forth cohomology-related information. For instance, we show
there is an injection from the vertices of any $\left(k-1\right)$-dimensional
biconvex polytope into the degree-$\left(k-1\right)$ monomials in
$k$ indeterminates. When the biconvex polytope has the maximum number
of vertices which is equal to the number of those monomials, the injection
becomes a bijection.\smallskip{}

Our construction provides manually feasible guidelines for subdividing
the hypersimplex into base polytopes. As a demonstration we work out
the rank-$4$ case.\smallskip{}

All the computations are manually done with pen and paper by using
our theory, without resorting to computers.

\subsection*{Terminological note}

For a finite set $S$, we denote by $\mathbb{R}^{S}$ the Cartesian
product of $\left|S\right|$ copies of $\mathbb{R}$ that are labelled
by the elements of $S$.\smallskip{}

The rank-$k$ uniform matroid on $S$ is denoted by $U_{S}^{k}$.
The base polytope of $U_{S}^{k}$ is denoted by $\Delta_{S}^{k}$
which is called a \emph{hypersimplex}. For $S=\left[n\right]:=\left\{ 1,\dots,n\right\} $,
we write $U_{n}^{k}$ for $U_{\left[n\right]}^{k}$, and $\Delta_{n}^{k}$
for $\Delta_{\left[n\right]}^{k}$. We will often write $\Delta$
without a superscript nor a subscript unless confusion could arise.\smallskip{}

For all $i\in S$ we understand $x_{i}$ as coordinate functions of
$\mathbb{R}^{S}$ or indeterminates for the coordinates. For a vector
$\mathbf{v}\in\mathbb{R}^{S}$ and $i\in S$ we denote by $x_{i}\left(\mathbf{v}\right)$
the $i$-th coordinate of $\mathbf{v}$. For a nonempty subset $A$
of $S$, we denote ${\textstyle x\left(A\right)=\sum_{i\in A}x_{i}}$.\smallskip{}

Let $Q$ be a polyhedron in $\mathbb{R}^{S}$ and $\mathcal{Q}$ a
set of describing equations and inequalities of it. We frequently
write $\mathcal{Q}$ for $Q$ (even though there can be different
such sets). So, $\left\{ x\left(S\right)=k\right\} $ may denote the
polyhedron in an ambient space determined by the equation $x\left(S\right)=k$.
For instance, $\Delta_{S}^{k}=\left\{ x\left(S\right)=k\right\} =\left[0,1\right]^{S}\cap\left\{ x\left(S\right)=k\right\} $.\smallskip{}

We deal with directed graphs in Section \ref{sec:Graphical Model}
for which we use the terms ``nodes'' and ``arrows'' instead of
``vertices'' and ``edges'' to avoid any possible confusion because
we use the latter terms for biconvex polytopes. But, arrows without
directions can still be referred to as edges.\medskip{}

We use \textbf{boldface} to define terms and \emph{italics} for emphasis.

\subsection*{Acknowledgements}

The author learned of the possible relationship between tropical linear
spaces and biconvex polytopes from Bernd Sturmfels in 2014 and started
this research at his suggestion. Special thanks to Thomas Zaslavsky
for invaluable conversations and comments. He pointed out that those
matroids constructed in Section \ref{sec:Gammoid} are gammoids. The
author is grateful to June Huh and Günter Ziegler for their interest
and some advice. He would also like to acknowledge email exchanges
with Michael Joswig, Benjamin Schröter, and David Speyer.

This research, at the final stage, was partially supported by the
fund of National Research Foundation of Korea (\#2019R1A2C3010487),
and he thanks JongHae Keum for the support.

\section{\label{sec:Face Analysis}Face Analysis of Biconvex Polytopes}

We need to fix notation first. For $a,b\in\mathbb{R}$, the \textbf{tropical
sum} of $a$ and $b$ is $a\boxplus b=\min\left\{ a,b\right\} $\footnote{We use $\boxplus$ to denote tropical sum as reserving $\oplus$ for
direct sum.} and the \textbf{tropical product} of $a$ and $b$ is $a\odot b=a+b$.
Then, $\left(\mathbb{R}\cup\left\{ \infty\right\} ,\boxplus,\odot\right)$
is a semiring which is called \textbf{min-plus algebra}. By replacing
minimum with maximum, we obtain another semiring $\left(\mathbb{R}\cup\left\{ -\infty\right\} ,\max,+\right)$
which is called \textbf{max-plus algebra}. These two algebras are
isomorphic.

\subsection{Biconvex polytopes}

Let $k$ be a positive integer and $\mathbbm1$ denote the all-one
vector $\left(1,\dots,1\right)\in\mathbb{R}^{k}$. For $a\in\mathbb{R}$
and $\mathbf{x}\in\mathbb{R}^{k}$, the \textbf{tropical scalar multiplication}
 $a\odot\mathbf{x}$ is 
\[
a\odot\mathbf{x}=a\mathbbm1+\mathbf{x}.
\]

For vectors $\mathbf{x}_{1},\dots,\mathbf{x}_{n}\in\mathbb{R}^{k}$,
their tropical sum $\mathbf{x}_{1}\boxplus\cdots\boxplus\mathbf{x}_{n}$
is entrywise defined, that is, the $j$-th entry of $\mathbf{x}_{1}\boxplus\cdots\boxplus\mathbf{x}_{n}$
is $x_{1j}\boxplus\cdots\boxplus x_{nj}$ where $x_{ij}$ is the $j$-th
entry of $\mathbf{x}_{i}$. A \textbf{tropical linear sum} or a \textbf{tropical
linear combination} of $\mathbf{x}_{1},\dots,\mathbf{x}_{n}$ is
\[
a_{1}\odot\mathbf{x}_{1}\boxplus\cdots\boxplus a_{n}\odot\mathbf{x}_{n}
\]
 for some $a_{1},\dots,a_{n}\in\mathbb{R}$.\smallskip{}

A subset of $\mathbb{R}^{k}$ is called \textbf{tropically convex}
if it is closed under the operation of tropical linear sum. The \textbf{tropical
convex hull} of a subset $V\subset\mathbb{R}^{k}$ is the smallest
tropically convex subset that contains $V$, denoted by $\mathrm{tconv}\left(V\right)$.
Here, we say $V$ \textbf{generates} $\mathrm{tconv}\left(V\right)$.\smallskip{}

A tropically convex subset in $\mathbb{R}^{k}$ is closed under tropical
scalar multiplication. Thus, tropical convex hull is well-defined
over the quotient space $\mathbb{R}^{k}/\mathbb{R}\mathbbm1$ although
an individual tropical linear sum is \emph{not}.\smallskip{}

A tropically convex subset in $\mathbb{R}^{k}$ is an \emph{unbounded}
polyhedron. But, it is \emph{bounded} in $\mathbb{R}^{k}/\mathbb{R}\mathbbm1$
if it is generated by finitely many points. This leads us to the following
definition.
\begin{defn}
A \textbf{biconvex polytope} is a convex polytope in $\mathbb{R}^{k}/\mathbb{R}\mathbbm1$
that is a tropical convex hull of finitely many points.
\end{defn}

\begin{notation}
For two points $\mathbf{a},\mathbf{b}\in\mathbb{R}^{k}$, we define:
\noindent \begin{center}
$\mathbf{a}\equiv\mathbf{b}$\quad{}if and only if\quad{}$\mathbf{a}-\mathbf{b}=t\cdot\mathbbm1$
for some real number $t$.
\par\end{center}

\noindent Then, ``$\equiv$'' is equality in $\mathbb{R}^{k}/\mathbb{R}\mathbbm1$,
that is, equality in $\mathbb{R}^{k}$ modulo $\mathbb{R}\mathbbm1$.
\end{notation}

\subsection{Maximal biconvex polytopes}

Given a biconvex polytope, by passing to the tropical projective space
of the same dimension if necessary, we may assume it is full-dimensional,
cf. \cite[Proposition 17]{DS04}.\medskip{}

Let $\mathrm{tconv}\left(\mathbf{v}_{1},\dots,\mathbf{v}_{k}\right)\subset\mathbb{R}^{k}/\mathbb{R}\mathbbm1$
be full-dimensional, then it contains a unique full-dimensional cell
$P$, and henceforth we may assume: 
\[
P=\mathrm{tconv}\left(\mathbf{v}_{1},\dots,\mathbf{v}_{k}\right).
\]
 Then, $\left\{ \mathbf{v}_{1},\dots,\mathbf{v}_{k}\right\} $ is
the unique inclusionwise minimal set of points in $\mathbb{R}^{k}/\mathbb{R}\mathbbm1$
that generates $P$ where the inclusion is set inclusion, \cite[Proposition 21]{DS04}.
Denote 
\[
\mathrm{Vert}\left(P\right)=\left\{ \text{the vertices of }P\right\} .
\]
 The cardinality of $\mathrm{Vert}\left(P\right)$ is at least $k$
and at most $\binom{2k-2}{k-1}$, \cite[Proposition 19]{DS04}, and
it makes sense to introduce the following notation 
\[
\mathrm{Vert}^{0}\left(P\right):=\mathrm{Vert}\left(P\right)-\left\{ \mathbf{v}_{1},\dots,\mathbf{v}_{k}\right\} .
\]

\begin{defn}
A \textbf{maximal} biconvex polytope is a full-dimensional one with
the maximum number of vertices.
\end{defn}

Unless otherwise stated we assume our biconvex polytope is maximal
because a biconvex polytope of lower dimension or with fewer number
of vertices is obtained as a tropical degeneration of a maximal biconvex
polytope $\mathrm{tconv}\left(\mathbf{v}_{1},\dots,\mathbf{v}_{k}\right)$
for some integer $k$ as varying the $k$ points $\mathbf{v}_{1},\dots,\mathbf{v}_{k}$.

\subsection{Min-plus hyperplanes}

The \textbf{min-plus hyperplane} $H^{\mathbf{a}}$ at $\mathbf{a}=\left(a_{1},\dots,a_{k}\right)\in\mathbb{R}^{k}$
is defined as the set of points $\left(x_{1},\dots,x_{k}\right)$
such that the minimum 
\[
a_{1}\odot x_{1}\boxplus\cdots\boxplus a_{k}\odot x_{k}
\]
 occurs at least twice, that is, the minimum equals $a_{i}\odot x_{i}$
and $a_{j}\odot x_{j}$ for some $i$ and $j$ with $i\neq j$. Then,
$H^{\mathbf{a}}$ is written as
\[
H^{\mathbf{a}}:=\bigcup_{i,j\in\left[k\right],\,i\neq j}\;\bigcap_{l\in\left[k\right]-\left\{ i,j\right\} }\left\{ a_{l}+x_{l}\ge a_{i}+x_{i}=a_{j}+x_{j}\right\} .
\]
Min-plus hyperplanes are sometimes called \textbf{tropical hyperplanes}.
We call each summand ${\displaystyle \bigcap_{l\in\left[k\right]-\left\{ i,j\right\} }\left\{ a_{l}+x_{l}\ge a_{i}+x_{i}=a_{j}+x_{j}\right\} }$
with $i\neq j$ the \textbf{\{}\textbf{\emph{i,\,j}}\textbf{\}-min-branch}
of the tropical hyperplane. The \textbf{\emph{i}}\textbf{-th} \textbf{min-sector}
by the tropical hyperplane \textbf{at} $\mathbf{a}$ is defined as
\[
E_{i}^{\mathbf{a}}:=\bigcap_{j\in\left[k\right]-\left\{ i\right\} }\left\{ a_{j}+x_{j}\ge a_{i}+x_{i}\right\} .
\]
Min-plus hyperplanes, min-branches, and min-sectors are tropically
convex, cf.~\cite[Proposition 6 and Corollary 7]{DS04}.

\subsection{Max-plus hyperplanes}

In the same manner, the \textbf{max-plus hyperplane $\bar{H}^{\mathbf{a}}$}
\textbf{at} $\mathbf{a}=\left(a_{1},\dots,a_{k}\right)$ is defined
as the set of points $\left(x_{1},\dots,x_{k}\right)$ such that the
maximum 
\[
\max\left(a_{1}+x_{1},\dots,a_{k}+x_{k}\right)
\]
 occurs at least twice, which is 
\[
\bar{H}^{\mathbf{a}}=\bigcup_{i,j\in\left[k\right],\,i\neq j}\;\bigcap_{l\in\left[k\right]-\left\{ i,j\right\} }\left\{ a_{l}+x_{l}\le a_{i}+x_{i}=a_{j}+x_{j}\right\} .
\]
 We call each summand ${\displaystyle \bigcap_{l\in\left[k\right]-\left\{ i,j\right\} }\left\{ a_{l}+x_{l}\le a_{i}+x_{i}=a_{j}+x_{j}\right\} }$
with $i\neq j$ the \textbf{\{}\textbf{\emph{i,\,j}}\textbf{\}-max-branch}
of the max-plus hyperplane. The \textbf{\emph{i}}\textbf{-th} \textbf{max-sector}
by the max-plus hyperplane \textbf{at} $\mathbf{a}$ is defined as
\[
\bar{E}_{i}^{\mathbf{a}}:=\bigcap_{j\in\left[k\right]-\left\{ i\right\} }\left\{ a_{j}+x_{j}\le a_{i}+x_{i}\right\} .
\]
 Max-sectors and min-sectors are related as follows: for $i\in\left[k\right]$
and $\mathbf{a}\in\mathbb{R}^{k}$
\[
\bar{E}_{i}^{\mathbf{a}}=-E_{i}^{\left(-\mathbf{a}\right)}.
\]
Max-plus hyperplanes are closed under tropical scalar multiplication
and so are max-branches and max-sectors.

\subsection{Tropical objects over the quotient space $\mathbb{R}^{k}/\mathbb{R}\mathbbm1$.}

All of min-plus and max-plus hyperplanes, branches, and sectors are
well-defined over $\mathbb{R}^{k}/\mathbb{R}\mathbbm1$. Any branch
has codimension $1$, and the boundary of a sector is a union of branches.
Thus we will often call the $\left\{ i,j\right\} $-branch of a sector
the \textbf{\{}\textbf{\emph{i,\,j}}\textbf{\}-facet} of the sector.

\subsection{Vertices of biconvex polytopes}

Let $P=\mathrm{tconv}\left(\mathbf{v}_{1},\dots,\mathbf{v}_{k}\right)\subset\mathbb{R}^{k}/\mathbb{R}\mathbbm1$
be a maximal biconvex polytope, then $\left\{ \mathbf{v}_{1},\dots,\mathbf{v}_{k}\right\} \subset\mathrm{Vert}\left(P\right)$
generates $P$. At each vertex $\mathbf{v}_{i}$ of $P$ there is
a unique max-sector that contains $P$, say $\bar{E}_{\pi\left(i\right)}^{\mathbf{v}_{i}}$
for some permutation $\pi:\left[k\right]\rightarrow\left[k\right]$.
From now on, by rearranging indices if necessary, we may assume $\pi=\mathrm{id}$
and write 
\[
\bar{E}_{i}:=\bar{E}_{i}^{\mathbf{v}_{i}}.
\]
 Then:
\[
P=\bigcap_{i\in\left[k\right]}\bar{E}_{i}.
\]

\begin{defn}
For any vertex $\mathbf{w}\in\mathrm{Vert}\left(P\right)$, its \textbf{index
set} $I^{\mathbf{w}}$ is defined as 
\[
I^{\mathbf{w}}:=\left\{ i\in\left[k\right]:\mathbf{w}\in\left(\text{a facet of }\bar{E}_{i}\right)\right\} \neq\emptyset.
\]
 For each index $i\in I^{\mathbf{w}}$, the \textbf{exponent set}
$C_{i}^{\mathbf{w}}$ and the \textbf{span set} $D_{i}^{\mathbf{w}}$
are defined as 
\begin{align*}
C_{i}^{\mathbf{w}} & :=\left\{ l\in\left[k\right]-\left\{ i\right\} :\mathbf{w}\in\left(\text{the }\left\{ i,l\right\} \text{-facet of }\bar{E}_{i}\right)\right\} ,\\
D_{i}^{\mathbf{w}} & :=\left[k\right]-\left\{ i\right\} -C_{i}^{\mathbf{w}}.
\end{align*}
 Then, $i\notin C_{i}^{\mathbf{w}}$ and $i\notin D_{i}^{\mathbf{w}}$.
In particular, for $\mathbf{w}\in\left\{ \mathbf{v}_{1},\dots,\mathbf{v}_{k}\right\} $,
say for $\mathbf{w}=\mathbf{v}_{j}$:
\begin{align*}
I^{\mathbf{v}_{j}} & =\left\{ j\right\} ,\\
C_{j}^{\mathbf{v}_{j}} & =\left[k\right]-\left\{ j\right\} ,\\
D_{j}^{\mathbf{v}_{j}} & =\emptyset.
\end{align*}
 We will often write $I$, $C_{i}$ and $D_{i}$ without the superscript
$^{\mathbf{w}}$ for simplicity unless confusion could arise.
\end{defn}

Let $\mathbf{e}_{1},\dots,\mathbf{e}_{k}$ be the standard basis vectors
of $\mathbb{R}^{k}$. For two $i,j\in I$, there are nonnegative real
numbers $a_{l}$ for $l\in D_{i}$ and $b_{m}$ for $m\in D_{j}$
with 
\[
\mathbf{w}\equiv\mathbf{v}_{i}+\sum_{l\in D_{i}}a_{l}\left(-\mathbf{e}_{l}\right)\equiv\mathbf{v}_{j}+\sum_{m\in D_{j}}b_{m}\left(-\mathbf{e}_{m}\right).
\]
Note that we employ two affine coordinate systems here, one for each
of $\mathbf{v}_{i}$ and $\mathbf{v}_{j}$. Because the maximum of
coordinates of $\mathbf{w}-\mathbf{v}_{i}$ is its $i$-th coordinate,
$i\in D_{j}$ with $b_{i}\neq0$, and similarly $j\in D_{i}$ with
$a_{j}\neq0$. Thus, $I\subseteq D_{i}\cup\left\{ i\right\} $ for
any $i\in I$, and 
\begin{equation}
I\cap C_{i}=\emptyset.\label{eq:disjointness}
\end{equation}
 The vertex $\mathbf{w}$ is an intersection of $k-1$ branches, each
of which contains one and only one vertex from $\left\{ \mathbf{v}_{i}:i\in I\right\} $
by maximality of $P$. More specifically:
\begin{equation}
\mathbf{w}=\bigcap_{i\in I}\bigcap_{\;l\in C_{i}}\left(\text{the }\left\{ i,l\right\} \text{-facet of }\bar{E}_{i}\right).\label{eq:intersection}
\end{equation}
 The operand ${\displaystyle \bigcap_{l\in C_{i}}\left(\text{the }\left\{ i,l\right\} \text{-facet of }\bar{E}_{i}\right)}$
for $i\in I$ has codimension $\left|C_{i}\right|$ and 
\begin{equation}
\sum_{i\in I}\left|C_{i}\right|=k-1.\label{eq:codim}
\end{equation}
 A computation shows: 
\[
\bigcup_{i\in I}C_{i}=\left[k\right]-I\quad\text{and}\quad\bigcap_{i\in I}D_{i}=\emptyset.
\]
 If $\mathbf{w}\in\mathrm{Vert}^{0}\left(P\right)$, then $\left|I^{\mathbf{w}}\right|\ge2$
and vice versa, and we have 
\begin{equation}
0\le\left|\bigcap_{i\in I}C_{i}\right|\le1\quad\text{and}\quad k-1\le\left|\bigcup_{i\in I}D_{i}\right|\le k.\label{eq:type}
\end{equation}
 Now, for any vertex $\mathbf{w}\in\mathrm{Vert}\left(P\right)$ and
for all $i\in\left[k\right]-I$ define $C_{i}:=\emptyset$ and $D_{i}:=\emptyset.$
Then, $C_{i}$ and $D_{i}$ are  defined for all $i\in\left[k\right]$.
Note that $C_{i}\neq\emptyset$ if and only if $i$ is an index, that
is, $i\in I$.
\begin{notation}
\label{nota:vertices}For a vertex $\mathbf{w}\in\mathrm{Vert}\left(P\right)$,
we introduce the following notation 
\[
\mathbf{w}=\mathbf{v}_{1}^{C_{1}}\cdots\mathbf{v}_{k}^{C_{k}}=\prod_{i\in\left[k\right]}\mathbf{v}_{i}^{C_{i}}.
\]
 Practically, we remove all $\mathbf{v}_{i}^{C_{i}}$ with $C_{i}=\emptyset$
from the notation and write 
\[
\mathbf{w}=\prod_{i\in I}\mathbf{v}_{i}^{C_{i}}.
\]
 In particular, $\mathbf{v}_{i}=\mathbf{v}_{i}^{\left[k\right]-\left\{ i\right\} }$
for a vertex $\mathbf{v}_{i}$.
\end{notation}

\begin{defn}
Along the formula (\ref{eq:type}) we define the \textbf{type} of
a vertex $\mathbf{w}\in\mathrm{Vert}^{0}\left(P\right)$ as follows:
\[
\mathrm{type}\left(\mathbf{w}\right)=\begin{cases}
0 & \text{if }\bigcap_{i\in I^{\mathbf{w}}}C_{i}^{\mathbf{w}}=\emptyset,\\
1 & \text{otherwise.}
\end{cases}
\]
\end{defn}

\begin{example}
If $k\le4$, every vertex $\mathbf{w}\in\mathrm{Vert}^{0}\left(P\right)$
has type $1$.
\end{example}

\subsection{Faces of biconvex polytopes}

Let $Q$ be a face of $P$. For $i\in\left[k\right]$, denote 
\[
\mathrm{Vert}_{i}\left(Q\right):=\left\{ \mathbf{w}\in\mathrm{Vert}\left(Q\right):C_{i}^{\mathbf{w}}\neq\emptyset\right\} .
\]

\begin{defn}
The \textbf{index set} of $Q$ is defined as 
\[
I^{Q}:=\left\{ i\in\left[k\right]:\mathrm{Vert}_{i}\left(Q\right)\neq\emptyset\right\} .
\]
 For each index $i\in I^{Q}$, the \textbf{exponent set} $C_{i}^{Q}$
and the \textbf{span set} $D_{i}^{Q}$ are defined as 
\begin{align*}
C_{i}^{Q} & :={\displaystyle \bigcap_{\mathbf{w}\in\mathrm{Vert}_{i}(Q)}C_{i}^{\mathbf{w}}},\\
D_{i}^{Q} & :=\left[k\right]-\left\{ i\right\} -C_{i}^{Q}.
\end{align*}
 Using the above definitions, $Q$ is written as the following: 
\[
Q=\bigcap_{i\in I^{Q}}\bigcap_{l\in C_{i}^{Q}}\left(\text{the }\left\{ i,l\right\} \text{-facet of }\bar{E}_{i}\right)\cap P.
\]
 Moreover, its codimension is 
\[
\mathrm{codim}(Q)=\sum_{i\in I^{Q}}\left|C_{i}^{Q}\right|.
\]
 Observe that these are consistent with (\ref{eq:intersection}) and
(\ref{eq:codim}). Note that $I^{Q}\neq\emptyset$ and $C_{i}^{Q}\neq\emptyset$
for all $i\in I^{Q}$.\medskip{}
\end{defn}

Similarly as in the vertex case, for all $i\in\left[k\right]-I^{Q}$
define $C_{i}^{Q}:=\emptyset$ and $D_{i}^{Q}:=\emptyset.$ Then,
$C_{i}^{Q}$ and $D_{i}^{Q}$ are defined for all $i\in\left[k\right]$.
Note that $C_{i}^{Q}\neq\emptyset$ if and only if $i$ is an index,
that is, $i\in I^{Q}$.\medskip{}

Thus, we generalized the notions for vertices. We also generalize
Notation~\ref{nota:vertices}.
\begin{notation}
\label{nota:Faces}For a face $Q$ of $P$, we denote 
\[
Q=\mathbf{v}_{1}^{C_{1}^{Q}}\cdots\mathbf{v}_{k}^{C_{k}^{Q}}=\prod_{i\in\left[k\right]}\mathbf{v}_{i}^{C_{i}^{Q}}.
\]
\end{notation}

\begin{rem}
\label{rem:Expression}Notation~\ref{nota:Faces} describes how to
obtain a face $Q$ of $P$ from the unique generating set of vertices
$\left\{ \mathbf{v}_{1},\dots,\mathbf{v}_{k}\right\} $. It further
generalizes to a biconvex polytope that is \emph{not} maximal, but
then the uniqueness of expression fails.
\end{rem}

\section{\label{sec:Graphical Model}Graphical Model and Monomial Map}

\subsection{Directed bigraphs and faces of biconvex polytopes}

A \emph{bipartite graph} or a \emph{bigraph} for short is a graph
that does not contain any odd cycle.
\begin{defn}
A \textbf{directed bigraph} $G$ is a bigraph with \emph{ordered}
parts $\left(I,I^{c}\right)$ for a subset $I\subseteq V_{G}$ such
that if $\left(i,c\right)$ is an arrow, then $i\in I$ and $c\in I^{c}$.\footnote{A directed bigraph is a directed graph. We do not allow multiple arrows.}
\end{defn}

By definition, a directed bigraph is a directed graph with its own
ordered bipartite structure. So, two same directed graphs with different
ordered bipartite structures are distinguished. Note that one of $I$
and $I^{c}$ can be empty (in this case the graph is a set of isolated
nodes), but not both of them can.\medskip{}

Let $P=\mathrm{tconv}\left(\mathbf{v}_{1},\dots,\mathbf{v}_{k}\right)\subset\mathbb{R}^{k}/\mathbb{R}\mathbbm1$
be a maximal biconvex polytope. To each vertex $\mathbf{w}$ of $P$,
assign the directed graph $G_{\mathbf{w}}$ with node set $\left[k\right]$
satisfying that
\[
\left(i,c\right)\text{ is an arrow of }G_{\mathbf{w}}\quad\text{if and only if}\quad i\in I^{\mathbf{w}}\text{ and }c\in C_{i}^{\mathbf{w}}.
\]
 Then, $G_{\mathbf{w}}$ is a directed bigraph with ordered parts
$\left(I^{\mathbf{w}},\left[k\right]-I^{\mathbf{w}}\right)$ by (\ref{eq:disjointness}).
Moreover, $G_{\mathbf{w}}$ is a \emph{tree} by (\ref{eq:codim}).\medskip{}

Let $G$ be a directed bigraph with $V_{G}=\left[k\right]$ that is
a forest. If $i\in I$ and $c\in I^{c}$ are adjacent, then denote
by $\left(i,c\right)$ the \emph{unique} arrow that connects $i$
to $c$, and denote by $G(i,c)$ the graph obtained from $G$ by removing
$\left(i,c\right)$ from it. Then, $G(i,c)$ is a directed bigraph
with induced bipartite structure.

\medskip{}

Let $G^{+}(i,c)$ and $G^{-}(i,c)$ be the connected components of
$G(i,c)$ containing $i$ and $c$, respectively, both of which are
nonempty and have induced bipartite structure. Again, $G^{+}(i,c)$
and $G^{-}(i,c)$ are connected by definition whether or not $G$
is connected, and two node sets $V_{G^{+}(i,c)}$ and $V_{G^{-}(i,c)}$
are disjoint, which partition the node set of the connected component
of $G$ that contains $\left(i,c\right)$.\medskip{}

Let $Q$ be a nonempty proper face of $P$. Define a directed graph
$G_{Q}$ with node set $\left[k\right]$  such that
\[
\left(i,c\right)\text{ is an arrow of }G_{Q}\quad\text{if and only if}\quad i\in I^{Q}\text{ and }c\in C_{i}^{Q}.
\]
Given a vertex $\mathbf{w}$ of $Q$, we have 
\[
Q=\prod_{i\in\left[k\right]}\mathbf{v}_{i}^{C_{i}^{\mathbf{w}}-\left(C_{i}^{\mathbf{w}}-C_{i}^{Q}\right)}.
\]
I.e. $G_{Q}$ is obtained from $G_{\mathbf{w}}$ by deleting arrows
$\left(i,c\right)$ with $i\in I^{\mathbf{w}}$ and $c\in C_{i}^{\mathbf{w}}-C_{i}^{Q}$.
Therefore $G_{Q}$ is a directed bigraph that is a forest. Its bipartite
structure is induced from that of $G_{\mathbf{w}}$, but does not
depend on the choice of the vertex. The number of its connected components
minus the number of its isolated nodes is 
\[
\dim(Q)=\sum_{i\in I^{\mathbf{w}}}\left|C_{i}^{\mathbf{w}}-C_{i}^{Q}\right|.
\]

\subsection{\label{subsec:Edges}Edges of biconvex polytopes}

Let $Q$ be an edge of $P$ with vertices $\left\{ \mathbf{w}_{1},\mathbf{w}_{2}\right\} $.
There are arrows $\left(i_{1},c_{1}\right)$ and $\left(i_{2},c_{2}\right)$
of $G_{\mathbf{w}_{1}}$ and $G_{\mathbf{w}_{2}}$, respectively,
such that 
\[
Q=\left(\prod_{i\in I^{\mathbf{w}_{1}}-\left\{ i_{1}\right\} }\mathbf{v}_{i}^{C_{i}^{\mathbf{w}_{1}}}\right)\mathbf{v}_{i_{1}}^{C_{i_{1}}^{\mathbf{w}_{1}}-\left\{ c_{1}\right\} }=\left(\prod_{i\in I^{\mathbf{w}_{2}}-\left\{ i_{2}\right\} }\mathbf{v}_{i}^{C_{i}^{\mathbf{w}_{2}}}\right)\mathbf{v}_{i_{2}}^{C_{i_{2}}^{\mathbf{w}_{2}}-\left\{ c_{2}\right\} }
\]
 where $i_{l}\in I^{\mathbf{w}_{l}}$ and $c_{l}\in C_{i_{l}}^{\mathbf{w}_{l}}$
for $l=1,2$. In other words:
\[
G_{Q}=G_{\mathbf{w}_{1}}(i_{1},c_{1})=G_{\mathbf{w}_{2}}(i_{2},c_{2}).
\]
Let $G_{1}'=G_{\mathbf{w}_{1}}^{+}(i_{1},c_{1})$ and $G_{2}'=G_{\mathbf{w}_{1}}^{-}(i_{1},c_{1})$,
then $G_{1}'$ and $G_{2}'$ are the two connected components of $G_{Q}$
with 
\[
G_{Q}=G_{1}'\cup G_{2}'.
\]

We show  $i_{2}\in V_{G_{2}'}$. For $l=1,2$, let $I_{l}=I^{\mathbf{w}_{1}}\cap V_{G_{l}'}$
and $C^{l}=V_{G_{l}'}-I_{l}$, then $G_{1}'$ and $G_{2}'$ are directed
bigraphs with parts $\left(I_{1},C^{1}\right)$ and $\left(I_{2},C^{2}\right)$,
respectively, and $G_{Q}$ is a directed bigraph with parts $\left(I_{1}\cup I_{2},C^{1}\cup C^{2}\right)$.

\smallskip{}

Let $Q_{1}$ and $Q_{2}$ be the faces of $P$ corresponding to two
directed bigraphs $G_{1}'\cup V_{G_{2}'}$ and $G_{2}'\cup V_{G_{1}'}$,
respectively, then 
\[
Q=Q_{1}\cap Q_{2}.
\]

Every point of $Q_{1}$ is contained in the $\left\{ l,c\right\} $-max-branch
of the max-plus hyperplane at $\mathbf{v}_{l}$ for all $l\in I_{1}$
and $c\in C^{1}\cap C_{l}^{Q}$. So, for each $i\in V_{G_{1}'}$ the
$i$-th coordinate of the point is a fixed real number, say $a_{i}$,
and so is the $j$-th coordinate of any point of $Q_{2}$ for each
$j\in V_{G_{2}'}$, say $b_{j}$. Note that we are employing two affine
coordinate systems. Then, given a point $\mathbf{u}$ of $Q$ there
are real numbers $x_{i}$ and $y_{j}$ for $i\in V_{G_{1}'}$ and
$j\in V_{G_{2}'}$ with:
\[
\mathbf{u}\equiv\sum_{i\in V_{G_{1}'}}a_{i}\mathbf{e}_{i}+\sum_{j\in V_{G_{2}'}}y_{j}\mathbf{e}_{j}\equiv\sum_{i\in V_{G_{1}'}}x_{i}\mathbf{e}_{i}+\sum_{j\in V_{G_{2}'}}b_{j}\mathbf{e}_{j}.
\]
Thus, there is a fixed real number $t$ with $x_{i}=a_{i}+t$ and
$y_{j}=b_{j}-t$ for all $i\in V_{G_{1}'}$ and $j\in V_{G_{2}'}$.
Let $\mathbf{s}=\sum_{i\in V_{G_{1}'}}a_{i}\mathbf{e}_{i}+\sum_{j\in V_{G_{2}'}}b_{j}\mathbf{e}_{j}$,
then 
\[
\mathbf{u}\equiv\mathbf{s}-t\cdot1^{V_{G_{2}'}}\equiv\mathbf{s}+t\cdot1^{V_{G_{1}'}}.
\]
Now, since $i_{1}\in V_{G_{1}'}$, we have $i_{2}\in V_{G_{2}'}$.
Moreover, the vector $\overrightarrow{\mathbf{w}_{1}\mathbf{w}_{2}}$
is a positive multiple of $-1^{V_{G_{2}'}}$ in $\mathbb{R}^{k}/\mathbb{R}\mathbbm1$
and $\overrightarrow{\mathbf{w}_{2}\mathbf{w}_{1}}$ is that of $-1^{V_{G_{1}'}}.$
In particular: 
\[
G_{\mathbf{w}_{1}}^{+}(i_{1},c_{1})=G_{\mathbf{w}_{2}}^{-}(i_{2},c_{2})\quad\text{and}\quad G_{\mathbf{w}_{1}}^{-}(i_{1},c_{1})=G_{\mathbf{w}_{2}}^{+}(i_{2},c_{2}).
\]

\subsection{Combinatorial log map}
\begin{defn}
\label{def:comb-log} Let $P=\mathrm{tconv}\left(\mathbf{v}_{1},\dots,\mathbf{v}_{k}\right)\subset\mathbb{R}^{k}/\mathbb{R}\mathbbm1$
be a maximal biconvex polytope. For each vertex $\mathbf{w}$ of $P$,
there are exactly $k-1$ edges $Q$ of $P$. For each edge $Q$ with
$\mathrm{Vert}\left(Q\right)=\left\{ \mathbf{w},\mathbf{v}\right\} $,
there is the unique subset $V$ of $\left[k\right]$ with: 
\[
\mathbf{v}\equiv\mathbf{w}-t\cdot1^{V}
\]
 for a \emph{positive} number $t$ which also uniquely exists. We
define $\mathrm{L}^{\mathbf{w}}$ such that 
\[
\begin{array}{ccc}
\mathrm{L}^{\mathbf{w}}:\left\{ k-1\text{ edges }Q\text{ of }P\text{ containing }\mathbf{w}\right\}  & \rightarrow & 2^{\left[k\right]}\\
\qquad Q & \mapsto & V
\end{array}.
\]
We call $\mathrm{L}$ the \textbf{combinatorial log map}\footnote{This is named after the logarithmic map producing amoebas, cf.~\cite[Chapter 6.1.B]{GKZ94}.}
and $\mathrm{L}^{\mathbf{w}}$ the \textbf{combinatorial log map}
\textbf{at $\mathbf{w}$} for $P$.
\end{defn}

Note that $\emptyset\neq\mathrm{L}^{\mathbf{w}}(Q)\neq\left[k\right]$
and $\mathrm{L}^{\mathbf{v}}(Q)=\left[k\right]-\mathrm{L}^{\mathbf{w}}(Q)$.\medskip{}

Let $P'$ be a tropical degeneration of $P$, which is also biconvex,
but not necessarily maximal. Because the direction vectors of edges
of $P'$ are direction vectors of edges of $P$, the combinatorial
log map is defined for $P'$. Thus, the combinatorial log map is defined
for \emph{any} biconvex polytope.
\begin{example}
The edge structure of $P$ at a type-$1$ vertex $\mathbf{w}\in\mathrm{Vert}^{0}(P)$
is particularly nice because if $\left|\bigcap_{i\in I^{\mathbf{w}}}C_{i}^{\mathbf{w}}\right|=1$,
then all subsets $C_{j}^{\mathbf{w}}-\bigcap_{i\in I^{\mathbf{w}}}C_{i}^{\mathbf{w}}$
of $\left[k\right]$ with $j\in I^{\mathbf{w}}$ are mutually disjoint.
The directed bigraph $G_{\mathbf{w}}$ also has a nice structure,
see Figure \ref{fig:Edges-type1}.\footnote{We draw a directed bigraph such that its parts are $\left(I=\left\{ \text{lower nodes}\right\} ,I^{c}=\left\{ \text{upper nodes}\right\} \right)$.}
It is easy to find all $k-1$ edges $Q$ connected to $\mathbf{w}$,
whose images under $\mathrm{L}^{\mathbf{w}}$ are 
\[
\mathrm{L}^{\mathbf{w}}(Q)=\begin{cases}
C_{j}^{\mathbf{w}}-\bigcap_{i\in I^{\mathbf{w}}}C_{i}^{\mathbf{w}} & \text{for }j\in I^{\mathbf{w}},\\
\left[k\right]-\left\{ j\right\}  & \text{for }j\in\left[k\right]-\bigcap_{i\in I^{\mathbf{w}}}C_{i}^{\mathbf{w}}-I^{\mathbf{w}}.
\end{cases}
\]
\begin{figure}[th]
\noindent \centering{}\noindent \begin{center}
\begin{tikzpicture}[blowup line/.style={line width=2pt,grey},font=\scriptsize]

\begin{scope}[line cap=round,scale=1.2,decoration={markings,mark=at position 0.65 with {\arrow{Stealth}}}]
 \fill [black](0,1) circle(1.83pt) coordinate(a0)node[above=0.7em]{$\bigcap_{i\in I^{\mathbf{w}}}C_{i}^{\mathbf{w}}$};
 \draw [densely dashed,ultra thin] (0,1) ellipse (0.2 and 0.2);

\foreach \x in {1,2}{
   \path (2.5*\x,0)coordinate(a\x)node[right]{$i_{\x}$}
  ++(0,1)node[above=0.7em]{$C_{i_{\x}}^{\mathbf{w}}-\bigcap_{i\in I^{\mathbf{w}}}C_{i}^{\mathbf{w}}$};
 \draw [densely dashed,ultra thin] (2.5*\x,1) ellipse (1 and 0.2);}

 \path (8.5,0) coordinate(a4)node[right]{$i_m$}
   ++(0,1)node[above=0.7em]{$C_{i_{m}}^{\mathbf{w}}-\bigcap_{i\in I^{\mathbf{w}}}C_{i}^{\mathbf{w}}$};
 \draw [densely dashed,ultra thin] (8.5,1) ellipse (1 and 0.2);

 \foreach \x in {-1,0,1}{
   \fill (6.75+.3*\x,.5) circle(.58pt);}

 \foreach \x in {1,2,4}{
   \fill [black](a\x) circle (1.83pt);
   \foreach \y in {-1,0,1}{
     \draw [thick,postaction={decorate}](a\x)--++(.7*\y,1);
     \fill [black](a\x)--++(.7*\y,1) circle (1.83pt);}}
 \draw [thick,postaction={decorate}] (a1)..controls(1.87,0.06)..(a0);
 \draw [thick,postaction={decorate}] (a2)..controls(2.4,-.3)..(a0);
  \draw [thick,postaction={decorate}] (a4)..controls(2.57,-.5)..(a0);

 \draw [densely dashed,ultra thin] (5.5,-.08) ellipse (4 and 0.32) node[below=1.1em]{$I^{\mathbf{w}}=\left\{i_{1},\dots,i_{m}\right\}$};

\end{scope}

\end{tikzpicture}
\par\end{center}\caption{\label{fig:Edges-type1}The directed bigraph $G_{\mathbf{w}}$ of
a type-$1$ vertex $\mathbf{w}$.}
\end{figure}
\end{example}

\subsection{Monomial map}
\begin{defn}
\label{def:Monomials}Let $k\ge2$ be an integer. Let $P=\mathrm{tconv}\left(\mathbf{v}_{1},\dots,\mathbf{v}_{k}\right)\subset\mathbb{R}^{k}/\mathbb{R}\mathbbm1$
be any full-dimensional biconvex polytope whether maximal or not.
We define a map $\mu$ on the collection of vertices of $P$ such
that 
\[
\begin{array}{l}
\mu:\left\{ \text{the vertices of }P\right\} \rightarrow\left\{ \text{the degree-}(k-1)\text{ monomials in }x_{1},\dots,x_{k}\right\} \\
\hspace{0.7cm}\mathbf{w}=\mathbf{v}_{1}^{C_{1}}\cdots\mathbf{v}_{k}^{C_{k}}\hspace{0.2cm}\mapsto\hspace{2.5cm}x_{1}^{\left|C_{1}\right|}\cdots x_{k}^{\left|C_{k}\right|}
\end{array}.
\]
 We call this map the \textbf{monomial map}.
\end{defn}

\begin{prop}
\label{prop:Monomials}The monomial map $\mu$ is injective.
\end{prop}

\begin{proof}
We prove by induction on dimension $k-1$. For all $1$-dimensional
biconvex polytopes the map $\mu$ is injective, and the base case
holds.\smallskip{}

Suppose that $\mu$ is injective for all biconvex polytopes of dimension
$\le k-1$ for some $k\ge2$. We may assume $P$ is a maximal biconvex
polytope of dimension $k$.

Let $\mathbf{w}$ and $\mathbf{v}$ be vertices of $P$ with $\mu\left(\mathbf{w}\right)=\mu\left(\mathbf{v}\right)$,
then $I^{\mathbf{w}}=I^{\mathbf{v}}=:I$.

If $\left|I\right|=1$, clearly $\mathbf{w}=\mathbf{v}$.

If $\left|I\right|\ge2$, then in a fixed affine coordinate system,
the $i$-th coordinate of $\mathbf{w}$ and the $j$-th coordinate
of $\mathbf{v}$ for all $i,j\in I$ are the same. So, $\mathbf{w}$
and $\mathbf{v}$ are contained in a \emph{proper} face of $P$. This
face is a biconvex polytope which has the unique inclusionwise minimal
generating set, cf. \cite[Proposition 21]{DS04}, and inherits its
geometry from $P$. Thus, $\mathbf{w}=\mathbf{v}$ by the induction
hypothesis.
\end{proof}
\begin{rem}
If $P$ is a \emph{maximal} biconvex polytope, the monomial map $\mu$
is a \emph{bijection} because the maximum number of vertices of $P$
equals the number of degree-$\left(k-1\right)$ monomials in $k$
indeterminates, which is $\binom{2k-2}{k-1}$.
\end{rem}

\section{\label{sec:Gammoid}Gammoids and Matroid Subdivision}

All italicized terms not defined herein shall have the same definitions
as set forth in Appendix \ref{sec:Preliminaries}.

\subsection{Gammoids of our interest}

Let $G$ be a directed graph\footnote{We do not allow multiple arrows.}
with node set $V_{G}=\left[k\right]$, and $S$ be a finite set with
a partition $S=\bigcup_{j\in\left[k\right]}S_{j}$ which we will call
an \textbf{underlying partition}. Denote by $T(j)$ the collection
of $l\in\left[k\right]$ with $\left(j,l\right)$ being an arrow of
$G$, which is possibly empty.\smallskip{}

Let $\tilde{G}$ be a directed graph with node set $V_{\tilde{G}}=\left[k\right]\sqcup S$
(the disjoint union of $\left[k\right]$ and $S$) such that $\left(j,s\right)$
is an arrow if $j\in\left[k\right]$ and $s\in S_{l}$ for some $l\in\left\{ j\right\} \cup T(j)$.\smallskip{}

We denote by $\Gamma\left[G\right]$ the gammoid obtained from $\tilde{G}$.
Let $E_{j}=\bigcup_{l\in\left\{ j\right\} \cup T(j)}S_{l}$, then
$\Gamma\left[G\right]$ is a rank-$k$ matroid on $S$ which is the
matroid union of rank-$1$ uniform matroids on $E_{j}$: 
\[
\Gamma\left[G\right]=\bigvee_{j\in\left[k\right]}U_{E_{j}}^{1}.
\]
 This is a transversal matroid. Note that $\Gamma\left[G\right]$
is defined for all directed graphs $G$.
\begin{lem}
\label{lem:connected}Let $G$ be a directed bigraph with parts $\left(I,I^{c}\right)$.
If $\left|S_{c}\right|=1$ for some $c\in I^{c}$, then $\Gamma\left[G\right]$
is disconnected. If $G$ is connected and $\left|S_{c}\right|\ge2$
for all $c\in I^{c}$, then $\Gamma\left[G\right]$ is connected.
\end{lem}

\begin{proof}
If $\left|S_{c}\right|=1$ for some $c\in I^{c}$, the singleton $S_{c}$
is a coloop of $\Gamma\left[G\right]$, and $\Gamma\left[G\right]$
is disconnected.

If $G$ is connected and $\left|S_{c}\right|\ge2$ for all $c\in I^{c}$,
we show that there is a $\left(k+1\right)$-element subset $A\subset S$
with $M|_{A}\simeq U_{k+1}^{k}$, then by Lemma \ref{lem:GMHA}(\ref{enu:inseparable})
it follows that $\Gamma\left[G\right]$ is connected. Since $G$ is
a tree, there is a partial order $<$ on its node set with the \emph{smallest}
node $i_{0}$ of degree $1$. Let $c_{0}$ be a node with $c_{0}\gtrdot i_{0}$,
i.e. $c_{0}$ covers $i_{0}$, then $c_{0}\in I^{c}$ because $G$
is a directed bigraph. Denote by $\deg_{G}j$ the degree of $j$ in
$G$.
\begin{enumerate}
\item Take an element from $S_{c}$ for each $c\in I^{c}$.
\item Take an element from $S_{i}$ for each $i\in I$ with $\deg_{G}i=1$.
\item For each $i\in I$ with $\deg_{G}i>1$, take an element from $S_{c}$
for a $c\gtrdot i$.
\item Take an element from $S_{c_{0}}$.
\end{enumerate}
Thus $k+1$ elements are taken from $S$. Let $A$ be the set of these
elements, then $\left|A\cap S_{c}\right|\le2$ for all $c\in I^{c}$
and $M|_{A}=U_{A}^{k}$. The proof is done.
\end{proof}
\begin{cor}
\label{cor:Bipartite}Let $G$ be a directed bigraph with $\left|S_{c}\right|\ge2$
for all $c\in I^{c}$. If $G$ has $m$ connected components $G_{1},\dots,G_{m}$,
then $\Gamma\left[G_{1}\right],\dots,\Gamma\left[G_{m}\right]$ are
connected and 
\[
\Gamma\left[G\right]=\Gamma\left[G_{1}\right]\oplus\cdots\oplus\Gamma\left[G_{m}\right].
\]
 In particular, the number of connected components of $G$ equals
that of $\Gamma\left[G\right]$.
\end{cor}

\begin{rem}
\label{rem:bistructure}If $G$ is an isolated node, $\Gamma\left[G\right]$
is a rank-$1$ uniform matroid. So, for a set of isolated nodes, we
often ignore its bipartite structure.
\end{rem}

\subsection{Flats of $\Gamma\left[G\right]$ for a directed bigraph $G$}

For any subgraph $G'$, denote by $S_{V_{G}'}$ or more simply by
$S_{G'}$ the set $\bigcup_{j\in V_{G'}}S_{j}$. Then, it is immediate
that $S_{G^{+}(i,c)}$ for each arrow $\left(i,c\right)$ of $G$
is a flat of $\Gamma\left[G\right]$, of rank $\left|V_{G^{+}(i,c)}\right|$.
Moreover:
\[
\Gamma\left[G^{+}(i,c)\right]=\Gamma\left[G\right]|_{S_{G^{+}(i,c)}}.
\]
Observe that any base of
\[
\Gamma\left[G(i,c)\right]=\Gamma\left[G^{+}(i,c)\right]\oplus\Gamma\left[G^{-}(i,c)\right]
\]
 is a base of $\Gamma\left[G\right]$ whose intersection with $S_{G^{+}(i,c)}$
is a base of $\Gamma\left[G^{+}(i,c)\right]$, and vice versa. Therefore
it is a base of 
\[
\Gamma\left[G\right]|_{S_{G^{+}(i,c)}}\oplus\Gamma\left[G\right]/S_{G^{+}(i,c)}
\]
 and the converse holds. Thus we obtain 
\[
\Gamma\left[G(i,c)\right]=\Gamma\left[G\right]|_{S_{G^{+}(i,c)}}\oplus\Gamma\left[G\right]/S_{G^{+}(i,c)}.
\]
 In particular:
\[
\Gamma\left[G^{-}(i,c)\right]=\Gamma\left[G\right]/S_{G^{+}(i,c)}.
\]
All this proves $S_{G^{+}(i,c)}$ is a \emph{non-degenerate} flat
of $\Gamma\left[G\right]$.\medskip{}

Since $G$ has at most $k-1$ arrows, the gammoid $\Gamma[G]$ has
at most $k-1$ \emph{minimal} non-degenerate flats of the form $S_{G^{+}(i,c)}$.\medskip{}

Then, because any intersection of flats is a flat, the sets $S_{i}$
for all $i\in I$ are flats of rank $1$. In the same way, $\bigcup_{j\in N[c]}S_{j}$
for some $c\in C$ is a flat of rank $\left|N[c]\right|$ where $N[c]$
denotes the set of $c$ and its adjacent nodes.\smallskip{}

Further, for a subset $C\subset I^{c}$ the set $\bigcup_{c\in C}\bigcup_{j\in N[c]}S_{j}$
is a flat of rank $\left|\bigcup_{c\in C}N[c]\right|$ which is connected.
This flat is non-degenerate if and only if $\bigcup_{c\in C}N[c]$
is the node set $V_{G^{+}(i_{0},c_{0})}$ of a graph $G^{+}(i_{0},c_{0})$
for some arrow $(i_{0},c_{0})$. \medskip{}

Any singleton $\left\{ s\right\} $ in $S_{c}$ for $c\in I^{c}$
is a rank-$1$ flat and of course $M|_{\left\{ s\right\} }$ is connected.
Moreover, it is a non-degenerate flat because one can choose a subset
$A\subset S$ that contains $s$ with $M|_{A}\simeq U_{k+1}^{k}$
as in Lemma \ref{lem:connected} so that $M/\left\{ s\right\} |_{A-\left\{ s\right\} }=M|_{A}/\left\{ s\right\} =U_{A-\left\{ s\right\} }^{k-1}\simeq U_{k}^{k-1}$
which implies that $M/\left\{ s\right\} $ is connected.\medskip{}

The following lemma says that any non-degenerate flat of $\Gamma\left[G\right]$
arises in a way described above.
\begin{lem}
\label{lem:nondeg-1}Let $G$ be a directed bigraph with $\left|S_{c}\right|\ge2$
for all $c\in I^{c}$. Then, a minimal non-degenerate flat of $\Gamma\left[G\right]$
is either $S_{G^{+}(i,c)}$ for an arrow $\left(i,c\right)$ of $G$
or a singleton contained in $S_{c}$ for some $c\in I^{c}$.
\end{lem}

\begin{proof}
We may assume $G$ is connected. Let $A$ be a non-degenerate flat
of $\Gamma\left[G\right]$ that is not obtained by removing an arrow
of $G$.

If $A\cap S_{c}=\emptyset$ for all $c\in I^{c}$, then $A$ is a
disjoint union of $A_{i}$ with $i\in I$ and is disconnected. So,
assume $A\cap S_{c}\neq\emptyset$ for some $c\in I^{c}$.

Then, again since $A$ is a connected flat, if $\left|A\cap S_{c}\right|>1$,
then $\bigcup_{j\in N[c]}S_{j}\subseteq A$.

If $\left|A\cap S_{c}\right|=1$ and $A\cap\bigcup_{j\in N[c]}S_{j}\neq\emptyset$,
similarly $\bigcup_{j\in N[c]}S_{j}\subseteq A$.

Therefore, $\left|A\cap S_{c}\right|=1$ and $A\cap\bigcup_{j\in N[c]}S_{j}=\emptyset$.
This implies $\left|A\right|=1$.
\end{proof}
\begin{cor}
\label{cor:nondeg-2}Let $G$ be a \emph{connected} directed bigraph
with $\left|S_{c}\right|\ge2$ for all $c\in I^{c}$. Then, there
are precisely $k-1$ facets of the \emph{base polytope} $\mathrm{BP}_{\Gamma\left[G\right]}$
of $\Gamma\left[G\right]$ that are not contained in the boundary
of the hypersimplex $\Delta=\Delta_{S}^{k}$, which are $\mathrm{BP}_{\Gamma\left[G(i,c)\right]}$
for the $k-1$ arrows $\left(i,c\right)$ of $G$.
\end{cor}

\subsection{\label{subsec:MAIN}Matroid subdivisions dual to biconvex polytopes}

Fix an integer $k\ge1$ and an underlying partition $S=\bigcup_{i\in\left[k\right]}S_{i}$.
For any subset $A\subseteq\left[k\right]$, we will denote
\[
S_{A}=\bigcup_{j\in A}S_{j}.
\]
 We assume  $\left|S_{i}\right|\ge2$ for all $i\in\left[k\right]$
throughout this subsection.\medskip{}

The base polytope of a matroid $M$ on $S$ with rank function $r$
is the intersection of $\left\{ x\left(S\right)=r\left(S\right)\right\} $
and the following half-spaces:
\[
\ensuremath{\begin{cases}
\left\{ x_{i}\ge0\right\}  & \mbox{ for all }i\in S,\\
\left\{ x\left(A\right)\le r\left(A\right)\right\}  & \mbox{ for all subsets }A\subseteq S.
\end{cases}}
\]
 Here, every vector $1^{A}$ is normal to the face $\mathrm{BP}_{M}\cap\left\{ x\left(A\right)=r\left(A\right)\right\} $
of $\mathrm{BP}_{M}$, and moreover outward-pointing normal to $\mathrm{BP}_{M}$
through the face.\medskip{}

Let $P=\mathrm{tconv}\left(\mathbf{v}_{1},\dots,\mathbf{v}_{k}\right)$
in $\mathbb{R}^{k}/\mathbb{R}\mathbbm1$ be a biconvex polytope. We
may assume  $P$ is maximal. For any vertex $\mathbf{w}$ of $P$,
$\Gamma\left[G_{\mathbf{w}}\right]$ is connected by Lemma \ref{lem:connected},
which has precisely $k-1$ non-degenerate flats of the form $S_{G_{\mathbf{w}}^{+}(i,c)}$.
Then, $\mathrm{BP}_{\Gamma\left[G_{\mathbf{w}}\right]}$ is full-dimensional,
and the vectors $1^{S_{G_{\mathbf{w}}^{+}(i,c)}}$ are outward-pointing
normal to $\mathrm{BP}_{\Gamma\left[G_{\mathbf{w}}\right]}$.\medskip{}

Let $Q$ be an edge of $P$ with $\mathrm{Vert}\left(Q\right)=\left\{ \mathbf{v},\mathbf{w}\right\} $.
Then, $S_{\mathrm{L}^{\mathbf{v}}(Q)}$ and $S_{\mathrm{L}^{\mathbf{w}}(Q)}$
are non-degenerate flats of $\Gamma\left[G_{\mathbf{v}}\right]$ and
$\Gamma\left[G_{\mathbf{w}}\right]$, respectively, with $\left[k\right]=\mathrm{L}^{\mathbf{v}}(Q)\sqcup\mathrm{L}^{\mathbf{w}}(Q)$
and $S=S_{\mathrm{L}^{\mathbf{v}}(Q)}\sqcup S_{\mathrm{L}^{\mathbf{w}}(Q)}$.\medskip{}

Using the notation (\ref{eq:paren}), two matroids $\Gamma\left[G_{\mathbf{v}}\right](S_{\mathrm{L}^{\mathbf{v}}(Q)})$
and $\Gamma\left[G_{\mathbf{w}}\right](S_{\mathrm{L}^{\mathbf{w}}(Q)})$
are \emph{face matroids} of $\Gamma\left[G_{\mathbf{v}}\right]$ and
$\Gamma\left[G_{\mathbf{w}}\right]$, respectively, which are the
same. And we have 
\[
\mathrm{BP}_{\Gamma\left[G_{\mathbf{v}}\right](S_{\mathrm{L}^{\mathbf{v}}(Q)})}=\mathrm{BP}_{\Gamma\left[G_{\mathbf{w}}\right](S_{\mathrm{L}^{\mathbf{w}}(Q)})}=\mathrm{BP}_{\Gamma\left[G_{\mathbf{v}}\right]}\cap\mathrm{BP}_{\Gamma\left[G_{\mathbf{w}}\right]}
\]
 which is the common facet of $\mathrm{BP}_{\Gamma\left[G_{\mathbf{v}}\right]}$
and $\mathrm{BP}_{\Gamma\left[G_{\mathbf{w}}\right]}$. Moreover,
for $V=\left[k\right]$:
\[
\bigcap_{\mathbf{w}\in\mathrm{Vert}\left(P\right)}\mathrm{BP}_{\Gamma\left[G_{\mathbf{w}}\right]}=\mathrm{BP}_{\Gamma\left[V\right]}.
\]
Therefore
\[
\Sigma=\left\{ \mathrm{BP}_{\Gamma\left[G_{\mathbf{w}}\right]}:\mathbf{w}\in\mathrm{Vert}\left(P\right)\right\} 
\]
 is a \emph{matroid tiling}, that is, a \emph{face-fitting} collection
of base polytopes that is connected in codimension $1$. By Corollary
\ref{cor:nondeg-2}, its \emph{support} $\left|\Sigma\right|$ has
no facets that are not contained in the boundary of $\Delta$, which
means that $\left|\Sigma\right|=\Delta$ and $\Sigma$ is a matroid
subdivision of $\Delta$.\medskip{}

Let $\pi:\mathbb{R}^{S}\rightarrow\mathbb{R}^{k}$ be a linear map
defined by 
\[
\mathbf{x}=\sum_{s\in S}x_{s}\mathbf{e}_{s}\mapsto\mathbf{y}=\sum_{i\in\left[k\right]}\left(\sum_{s\in S_{i}}x_{s}\right)\mathbf{e}_{i}.
\]
 Then, the following hold.
\begin{itemize}
\item The hyperplane $\left\{ x\left(S_{i}\right)=1\right\} $ of $\mathbb{R}^{S}$
maps to the hyperplane $\left\{ y_{i}=1\right\} $ of $\mathbb{R}^{k}$.
\item The hyperplane $\left\{ x\left(S\right)=k\right\} $ maps to $\left\{ y\left(V\right)=k\right\} $
for $V=\left[k\right]$.
\item If $\mathrm{Q}=\mathrm{BP}_{\Gamma\left[V\right]}=\mathrm{BP}_{\bigoplus_{i\in\left[k\right]}U_{S_{i}}^{1}}$,
then $\pi\left(\mathrm{Q}\right)=\left\{ \mathbbm1\right\} $.
\item If $\left|S_{i}\right|\ge k$ for all $i\in\left[k\right]$, then
$\pi\left(\Delta_{S}^{k}\right)$ is a regular $\left(k-1\right)$-simplex.
\end{itemize}
For any polytope $\mathrm{P}\subseteq\Delta_{S}^{k}$, let us call
its image $\pi\left(\mathrm{P}\right)$ a \textbf{quotient polytope}.
Denote 
\[
\pi\left(\Sigma\right)=\left\{ \pi\left(\mathrm{P}\right):\mathrm{P}\in\Sigma\right\} 
\]
 which we call a \textbf{quotient tiling} if $\Sigma$ is a tiling,
and a \textbf{quotient subdivision} if $\Sigma$ is a subdivision.\medskip{}

Because it is convenient to have $\pi\left(\Delta_{S}^{k}\right)$
being a regular $\left(k-1\right)$-simplex, let us assume $\left|S_{i}\right|\ge k$
for all $i\in\left[k\right]$.\medskip{}

For a $\mathrm{BP}_{\Gamma\left[G_{\mathbf{w}}\right]}\in\Sigma$,
its facets not contained in the boundary of $\Delta_{S}^{k}$ are
precisely $\mathrm{BP}_{\Gamma\left[G_{\mathbf{w}}(i,c)\right]}$
for the $k-1$ arrows $\left(i,c\right)$ of $G_{\mathbf{w}}$. The
vector $1^{V_{G_{\mathbf{w}}^{+}(i,c)}}$ is outward-pointing normal
to the quotient polytope $\pi\left(\mathrm{BP}_{\Gamma\left[G_{\mathbf{w}}\right]}\right)$.\medskip{}

Now, denote $\mathrm{P}^{\ast}=\mathbbm1-\mathrm{P}$ for a polytope
$\mathrm{P}$, which is an involution of $\mathrm{P}$. Let $\Sigma^{\ast}$
be the collection of those involutions of all members of $\Sigma$:
\[
\Sigma^{\ast}=\left\{ \mathrm{P}^{\ast}\subseteq\Delta_{S}^{\left|S\right|-k}:\mathrm{P}\in\Sigma\right\} 
\]
 which is a polyhedral subdivision of the hypersimplex $\left(\Delta_{S}^{k}\right)^{\ast}=\Delta_{S}^{\left|S\right|-k}$.\smallskip{}

For each $\mathrm{BP}_{\Gamma\left[G_{\mathbf{w}}\right]}\in\Sigma$,
we have $\mathrm{BP}_{\Gamma\left[G_{\mathbf{w}}\right]^{\ast}}=\mathbbm1-\mathrm{BP}_{\Gamma\left[G_{\mathbf{w}}\right]}\in\Sigma^{\ast}$
and vice versa, where $\Gamma\left[G_{\mathbf{w}}\right]^{\ast}$
is the dual matroid of $\Gamma\left[G_{\mathbf{w}}\right]$. So, $\Sigma^{\ast}$
is a matroid subdivision, and the vector $-1^{V_{G_{\mathbf{w}}^{+}(i,c)}}$
is outward-pointing normal to $\pi\left(\mathrm{BP}_{\Gamma\left[G_{\mathbf{w}}\right]^{\ast}}\right)$.\medskip{}

Thus, the biconvex polytope $P$ is the bounded part of a polyhedral
complex that is dual to $\pi\left(\Sigma^{\ast}\right)$.
\begin{rem}
The polyhedral subdivisions $\pi\left(\Sigma\right)$ of $\pi\left(\Delta\right)$
and $\pi\left(\Sigma^{\ast}\right)$ of $\pi\left(\Delta^{\ast}\right)$,
and the matroid subdivisions $\Sigma$ of $\Delta$ and $\Sigma^{\ast}$
of $\Delta^{\ast}$ are all \emph{regular}.
\end{rem}

\subsection{Face-fitting directed bigraphs}

Let $G_{1}$ and $G_{2}$ be two directed bigraphs with the same node
set $V_{G_{1}}=V_{G_{2}}=\left[k\right]$, and fix an underlying partition.
Then, $\mathrm{BP}_{\Gamma\left[V_{G_{1}}\right]}$ is a common face
of $\mathrm{BP}_{\Gamma\left[G_{1}\right]}$ and $\mathrm{BP}_{\Gamma\left[G_{2}\right]}$.\smallskip{}

Suppose that none of $\mathrm{BP}_{\Gamma\left[G_{1}\right]}$ and
$\mathrm{BP}_{\Gamma\left[G_{2}\right]}$ contains the other, and
that $\mathrm{BP}_{\Gamma\left[G_{1}\right]}\cap\mathrm{BP}_{\Gamma\left[G_{2}\right]}$
is a common face of them. Then, there is a maximum common subgraph
$G'$ of $G_{1}$ and $G_{2}$ with $\mathrm{BP}_{\Gamma\left[G_{1}\right]}\cap\mathrm{BP}_{\Gamma\left[G_{2}\right]}=\mathrm{BP}_{\Gamma\left[G'\right]}$.
Moreover, there are subgraphs $G_{1}'$ and $G_{2}'$ of $G_{1}$
and $G_{2}$, respectively, with 
\begin{align}
G_{1}'(e_{1}) & =G_{2}'(e_{2})\nonumber \\
(G_{1}')^{+}(e_{1}) & =(G_{2}')^{-}(e_{2})\label{eq:face-fitting}\\
(G_{1}')^{-}(e_{1}) & =(G_{2}')^{+}(e_{2})\nonumber 
\end{align}
 for some arrows $e_{1}$ and $e_{2}$ of $G_{1}'$ and $G_{2}'$,
respectively.\medskip{}

Conversely, if there is a maximum common subgraph $G'$ of $G_{1}$
and $G_{2}$ with subgraphs $G_{1}'$ and $G_{2}'$, respectively,
satisfying (\ref{eq:face-fitting}), then $\mathrm{BP}_{\Gamma\left[G_{1}\right]}\cap\mathrm{BP}_{\Gamma\left[G_{2}\right]}$
is a common proper face of $\mathrm{BP}_{\Gamma\left[G_{1}\right]}$
and $\mathrm{BP}_{\Gamma\left[G_{2}\right]}$. In this case, we say
that the two directed bigraphs $G_{1}$ and $G_{2}$ are \textbf{face-fitting}.
We say that multiple directed bigraphs with the same node set are
\textbf{face-fitting} if they are pairwise face-fitting.
\begin{example}
The directed bigraphs $G_{1}$ and $G_{2}$ of Figure \ref{fig:bigraphs0}
have parts $\left(\left\{ 1\right\} ,\left\{ 2,3,4\right\} \right)$
and $\left(\left\{ 1,4\right\} ,\left\{ 2,3\right\} \right)$, respectively,
and are face-fitting since $G_{1}(1,4)=G_{2}(3,4)$ with $G_{1}^{+}(1,4)=G_{2}^{-}(3,4)$
and $G_{1}^{-}(1,4)=G_{2}^{+}(3,4)$. Then
\[
\mathrm{BP}_{\Gamma\left[G_{1}\right]}\cap\mathrm{BP}_{\Gamma\left[G_{2}\right]}=\mathrm{BP}_{\Gamma\left[G_{1}(1,4)\right]}=\mathrm{BP}_{\Gamma\left[G_{2}(3,4)\right]}
\]
 is a codimension-$1$ common face of $\mathrm{BP}_{\Gamma\left[G_{1}\right]}$
and $\mathrm{BP}_{\Gamma\left[G_{2}\right]}$, and $\left\{ \mathrm{BP}_{\Gamma\left[G_{1}\right]},\mathrm{BP}_{\Gamma\left[G_{2}\right]}\right\} $
is a matroid tiling.
\begin{figure}[th]
\noindent \centering{}\noindent \begin{center}
\begin{tikzpicture}[blowup line/.style={line width=2pt,grey},font=\scriptsize]

\matrix[column sep=1.5cm, row sep=0.6cm]{

\begin{scope}[line cap=round,scale=1,decoration={markings,mark=at position 0.65 with {\arrow{Stealth}}}]

 \path (-1,1) coordinate(b) node[above]{$2$};
 \path (0,1) coordinate(c) node[above]{$3$};
 \path (1,1) coordinate(d) node[above]{$4$};
 \path (0,0) coordinate(a) node[below]{$1$};

 \foreach \x in {b,c,d}{
          \draw [thick,postaction={decorate}] (a)--(\x);}

 \foreach \x in {a,b,c,d}{
          \fill [black](\x) circle (2.2pt);}
 \path (0,1.6) node{$G_{1}$};
\end{scope}

&

\begin{scope}[line cap=round,scale=1,decoration={markings,mark=at position 0.65 with {\arrow{Stealth}}}]

 \path (-1,1) coordinate(b) node[above]{$2$};
 \path (0,1) coordinate(c) node[above]{$3$};
 \path (1,1) coordinate(d) node[above]{$4$};
 \path (0,0) coordinate(a) node[below]{$1$};

 \foreach \x in {b,c}{
          \draw [thick,postaction={decorate}] (a)--(\x);}

 \foreach \x in {a,b,c,d}{
          \fill [black](\x) circle (2.2pt);}
 \path (0,1.6) node{$G_{1}(1,4)$};
\end{scope}

&

\begin{scope}[line cap=round,xshift=0.5cm,scale=1,decoration={markings,mark=at position 0.65 with {\arrow{Stealth}}}]

 \path (-1,1) coordinate(b) node[above]{$2$};
 \path (0,1) coordinate(c) node[above]{$3$};
 \path (0,0) coordinate(a) node[below]{$1$};

 \foreach \x in {b,c}{
          \draw [thick,postaction={decorate}] (a)--(\x);}

 \foreach \x in {a,b,c}{
          \fill [black](\x) circle (2.2pt);}
\end{scope}
\path (0,1.6) node{$G_{1}^{+}(1,4)$};

&

\begin{scope}[line cap=round,xshift=-1cm,yshift=-.5cm,scale=1,decoration={markings,mark=at position 0.65 with {\arrow{Stealth}}}]

 \path (1,1) coordinate(d) node[above]{$4$};
 \foreach \x in {d}{
          \fill [black](\x) circle (2.2pt);}
\end{scope}
\path (0,1.6) node{$G_{1}^{-}(1,4)$};
\\\hline
\\

\begin{scope}[line cap=round,scale=1,decoration={markings,mark=at position 0.65 with {\arrow{Stealth}}}]

 \path (-.5,1) coordinate(b) node[above]{$2$};
 \path (0.5,1) coordinate(c) node[above]{$3$};
 \path (-.5,0) coordinate(d) node[below]{$4$};
 \path (0.5,0) coordinate(a) node[below]{$1$};

 \foreach \x in {b,c}{
          \draw [thick,postaction={decorate}] (a)--(\x);}

 \draw [thick,postaction={decorate}] (d)--(b);

 \foreach \x in {a,b,c,d}{
          \fill [black](\x) circle (2.2pt);}
 \path (0,1.6) node{$G_{2}$};
\end{scope}

&

\begin{scope}[line cap=round,scale=1,decoration={markings,mark=at position 0.65 with {\arrow{Stealth}}}]

 \path (-.5,1) coordinate(b) node[above]{$2$};
 \path (0.5,1) coordinate(c) node[above]{$3$};
 \path (-.5,0) coordinate(d) node[below]{$4$};
 \path (0.5,0) coordinate(a) node[below]{$1$};

 \foreach \x in {b,c}{
          \draw [thick,postaction={decorate}] (a)--(\x);}

 \foreach \x in {a,b,c,d}{
          \fill [black](\x) circle (2.2pt);}
 \path (0,1.6) node{$G_{2}(4,2)$};
\end{scope}

&

\begin{scope}[line cap=round,xshift=.5cm,yshift=.5cm,scale=1,decoration={markings,mark=at position 0.65 with {\arrow{Stealth}}}]

 \path (-.5,0) coordinate(d) node[below]{$4$};

 \foreach \x in {d}{
          \fill [black](\x) circle (2.2pt);}
\end{scope}
\path (0,1.6) node{$G_{2}^{+}(4,2)$};

&

\begin{scope}[line cap=round,scale=1,decoration={markings,mark=at position 0.65 with {\arrow{Stealth}}}]

 \path (-.5,1) coordinate(b) node[above]{$2$};
 \path (0.5,1) coordinate(c) node[above]{$3$};
 \path (0.5,0) coordinate(a) node[below]{$1$};

 \foreach \x in {b,c}{
          \draw [thick,postaction={decorate}] (a)--(\x);}

 \foreach \x in {a,b,c}{
          \fill [black](\x) circle (2.2pt);}
 \path (0,1.6) node{$G_{2}^{-}(4,2)$};
\end{scope}

\\
};

\end{tikzpicture}
\par\end{center}\caption{\label{fig:bigraphs0}Face-Fitting Directed Bigraphs, I}
\end{figure}
\end{example}

\begin{example}
The $6$ directed bigraphs $G_{1},\dots,G_{6}$ of Figure \ref{fig:bigraphs1}
are face-fitting, and $\Sigma=\left\{ \mathrm{BP}_{\Gamma\left[G_{l}\right]}:l\in\left[6\right]\right\} $
is a matroid tiling with $5$ \emph{connecting} facets, i.e. $5$
common facets of two polytopes of $\Sigma$, whose matroids are:
\begin{itemize}
\item $\Gamma\left[G_{2}(4,2)\right]=\Gamma\left[G_{1}(1,4)\right]$,
\item $\Gamma\left[G_{2}(1,3)\right]=\Gamma\left[G_{5}(3,2)\right]$,
\item $\Gamma\left[G_{3}(1,3)\right]=\Gamma\left[G_{4}(4,1)\right]$,
\item $\Gamma\left[G_{3}(4,2)\right]=\Gamma\left[G_{6}(2,3)\right]$,
\item $\Gamma\left[G_{2}(1,2)\right]=\Gamma\left[G_{3}(4,3)\right]$.
\end{itemize}
\begin{figure}[th]
\noindent \centering{}\noindent \begin{center}
\begin{tikzpicture}[blowup line/.style={line width=2pt,grey},font=\scriptsize]

\matrix[column sep=0.25cm, row sep=0.25cm]{

\begin{scope}[line cap=round,scale=1,decoration={markings,mark=at position 0.65 with {\arrow{Stealth}}}]

 \path (-1,1) coordinate(b) node[above]{$2$};
 \path (0,1) coordinate(c) node[above]{$3$};
 \path (1,1) coordinate(d) node[above]{$4$};
 \path (0,0) coordinate(a) node[below]{$1$};

 \foreach \x in {b,c,d}{
          \draw [thick,postaction={decorate}] (a)--(\x);}

 \foreach \x in {a,b,c,d}{
          \fill [black](\x) circle (2.2pt);}
 \path (0,1.6) node{$G_{1}$};
\end{scope}

&

 \path (0,.8) node{$_{G_{1}(1,4)=G_{2}(4,2)}$};

\draw (-1,0.5)--(1,0.5);

\begin{scope}[line cap=round,xshift=-0.25cm,yshift=-0.4cm,scale=0.5,decoration={markings,mark=at position 0.75 with {\arrow{Stealth}}}]

 \path (-.5,1) coordinate(b) node[above=-1pt]{$_2$};
 \path (0.5,1) coordinate(c) node[above=-1pt]{$_3$};
 \path (1.25,0.5) coordinate(d) node[right=-1pt]{$_4$};
 \path (0.5,0) coordinate(a) node[below=-1pt]{$_1$};

 \foreach \x in {b,c}{
          \draw [thick,postaction={decorate}] (a)--(\x);}

 \foreach \x in {a,b,c,d}{
          \fill [black](\x) circle (2.2pt);}
\end{scope}

&

\begin{scope}[line cap=round,scale=1,decoration={markings,mark=at position 0.6 with {\arrow{Stealth}}}]

 \path (-.5,1) coordinate(b) node[above]{$2$};
 \path (0.5,1) coordinate(c) node[above]{$3$};
 \path (-.5,0) coordinate(d) node[below]{$4$};
 \path (0.5,0) coordinate(a) node[below]{$1$};

 \foreach \x in {b,c}{
          \draw [thick,postaction={decorate}] (a)--(\x);}

 \draw [thick,postaction={decorate}] (d)--(b);

 \foreach \x in {a,b,c,d}{
          \fill [black](\x) circle (2.2pt);}
 \path (0,1.6) node{$G_{2}$};
\end{scope}

&

 \path (0,.8) node{$_{G_{2}(1,3)=G_{5}(3,2)}$};

\draw (-1,0.5)--(1,0.5);

\begin{scope}[line cap=round,xshift=-0.25cm,yshift=-0.4cm,decoration={markings,mark=at position 0.65 with {\arrow{Stealth}}},scale=0.5]
 \path (-.5,1) coordinate(b) node[above=-1pt]{$_2$};
 \path (1.25,0.5) coordinate(c) node[right=-1pt]{$_3$};
 \path (-.5,0) coordinate(d) node[below=-1pt]{$_4$};
 \path (0.5,0) coordinate(a) node[below=-1pt]{$_1$};

 \foreach \x in {a,d}{
          \draw [thick,postaction={decorate}] (\x)--(b);}

 \foreach \x in {a,b,c,d}{
          \fill [black](\x) circle (2.2pt);}
\end{scope}

&

\begin{scope}[line cap=round,scale=1,decoration={markings,mark=at position 0.6 with {\arrow{Stealth}}}]

 \path (-1,0) coordinate(b) node[below]{$3$};
 \path (0,0) coordinate(c) node[below]{$4$};
 \path (1,0) coordinate(d) node[below]{$1$};
 \path (0,1) coordinate(a) node[above]{$2$};

 \foreach \x in {b,c,d}{
          \draw [thick,postaction={decorate}] (\x)--(a);}

 \foreach \x in {a,b,c,d}{
          \fill [black](\x) circle (2.2pt);}
 \path (0,1.6) node{$G_{5}$};
\end{scope}

\\

&
&

 \path (-.7,.5) node{$\begin{array}{c}_{G_{2}(1,2)}\\{\shortparallel}\\_{G_{3}(4,3)}\end{array}$};

\draw (0,-0.5)--(0,1.5);

\begin{scope}[line cap=round,xshift=.7cm,yshift=0.25cm,decoration={markings,mark=at position 0.75 with {\arrow{Stealth}}},scale=0.5]

 \path (-.5,1) coordinate(b) node[above=-1pt]{$_2$};
 \path (0.5,1) coordinate(c) node[above=-1pt]{$_3$};
 \path (-.5,0) coordinate(d) node[below=-1pt]{$_4$};
 \path (0.5,0) coordinate(a) node[below=-1pt]{$_1$};

 \draw [thick,postaction={decorate}] (a)--(c);
 \draw [thick,postaction={decorate}] (d)--(b);

 \foreach \x in {a,b,c,d}{
          \fill [black](\x) circle (2.2pt);}
\end{scope}

&
&

\\

\begin{scope}[line cap=round,scale=1,decoration={markings,mark=at position 0.65 with {\arrow{Stealth}}}]

 \path (-1,1) coordinate(b) node[above]{$1$};
 \path (0,1) coordinate(c) node[above]{$2$};
 \path (1,1) coordinate(d) node[above]{$3$};
 \path (0,0) coordinate(a) node[below]{$4$};

 \foreach \x in {b,c,d}{
          \draw [thick,postaction={decorate}] (a)--(\x);}

 \foreach \x in {a,b,c,d}{
          \fill [black](\x) circle (2.2pt);}
 \path (0,-.6) node{$G_{4}$};
\end{scope}

&

 \path (0,0.2) node{$_{G_{4}(4,1)=G_{3}(1,3)}$};

\draw (-1,0.5)--(1,0.5);

\begin{scope}[line cap=round,xshift=-0.25cm,yshift=0.9cm,decoration={markings,mark=at position 0.75 with {\arrow{Stealth}}},scale=0.5]

 \path (-.5,1) coordinate(b) node[above=-1pt]{$_2$};
 \path (0.5,1) coordinate(c) node[above=-1pt]{$_3$};
 \path (-.5,0) coordinate(d) node[below=-1pt]{$_4$};
 \path (1.25,0.5) coordinate(a) node[right=-1pt]{$_1$};

 \draw [thick,postaction={decorate}] (d)--(b);
 \draw [thick,postaction={decorate}] (d)--(c);

 \foreach \x in {a,b,c,d}{
          \fill [black](\x) circle (2.2pt);}
\end{scope}

&

\begin{scope}[line cap=round,scale=1,decoration={markings,mark=at position 0.6 with {\arrow{Stealth}}}]

 \path (-.5,1) coordinate(b) node[above]{$2$};
 \path (0.5,1) coordinate(c) node[above]{$3$};
 \path (-.5,0) coordinate(d) node[below]{$4$};
 \path (0.5,0) coordinate(a) node[below]{$1$};

 \foreach \x in {b,c}{
          \draw [thick,postaction={decorate}] (d)--(\x);}

 \draw [thick,postaction={decorate}] (a)--(c);

 \foreach \x in {a,b,c,d}{
          \fill [black](\x) circle (2.2pt);}
 \path (0,-.6) node{$G_{3}$};
\end{scope}

&

 \path (0,.2) node{$_{G_{3}(4,2)=G_{6}(2,3)}$};

\draw (-1,0.5)--(1,0.5);

\begin{scope}[line cap=round,xshift=-0.25cm,yshift=0.9cm,decoration={markings,mark=at position 0.65 with {\arrow{Stealth}}},scale=0.5]
 \path (1.25,0.5) coordinate(b) node[right=-1pt]{$_2$};
 \path (0.5,1) coordinate(c) node[above=-1pt]{$_3$};
 \path (-.5,0) coordinate(d) node[below=-1pt]{$_4$};
 \path (0.5,0) coordinate(a) node[below=-1pt]{$_1$};

 \draw [thick,postaction={decorate}] (a)--(c);
 \draw [thick,postaction={decorate}] (d)--(c);

 \foreach \x in {a,b,c,d}{
          \fill [black](\x) circle (2.2pt);}
\end{scope}

&

\begin{scope}[line cap=round,scale=1,decoration={markings,mark=at position 0.6 with {\arrow{Stealth}}}]

 \path (-1,0) coordinate(b) node[below]{$4$};
 \path (0,0) coordinate(c) node[below]{$1$};
 \path (1,0) coordinate(d) node[below]{$2$};
 \path (0,1) coordinate(a) node[above]{$3$};

 \foreach \x in {b,c,d}{
          \draw [thick,postaction={decorate}] (\x)--(a);}

 \foreach \x in {a,b,c,d}{
          \fill [black](\x) circle (2.2pt);}
 \path (0,-.6) node{$G_{6}$};
\end{scope}

\\
};

\end{tikzpicture}
\par\end{center}\caption{\label{fig:bigraphs1}Face-Fitting Directed Bigraphs, II}
\end{figure}
\end{example}

\begin{example}
Replace $G_{2}$ and $G_{3}$ of Figure \ref{fig:bigraphs1} with
$G_{2}'$ and $G_{3}'$ of Figure \ref{fig:bigraphs2}, then $G_{1},G_{4},G_{5},G_{6},G_{2}',G_{3}'$
are face-fitting and the corresponding matroid tiling with $5$ connecting
facets whose matroids are:
\begin{itemize}
\item $\Gamma\left[G'_{2}(4,3)\right]=\Gamma\left[G_{1}(1,4)\right]$,
\item $\Gamma\left[G'_{2}(1,2)\right]=\Gamma\left[G_{6}(2,3)\right]$,
\item $\Gamma\left[G'_{3}(1,2)\right]=\Gamma\left[G_{4}(4,1)\right]$,
\item $\Gamma\left[G'_{3}(4,3)\right]=\Gamma\left[G_{5}(3,2)\right]$,
\item $\Gamma\left[G'_{2}(1,3)\right]=\Gamma\left[G'_{3}(4,2)\right]$.
\end{itemize}
\begin{figure}[th]
\noindent \centering{}\noindent \begin{center}
\begin{tikzpicture}[blowup line/.style={line width=2pt,grey},font=\scriptsize]

\matrix[column sep=0.25cm, row sep=0.25cm]{

\begin{scope}[line cap=round,scale=1,decoration={markings,mark=at position 0.65 with {\arrow{Stealth}}}]

 \path (-1,1) coordinate(b) node[above]{$4$};
 \path (0,1) coordinate(c) node[above]{$2$};
 \path (1,1) coordinate(d) node[above]{$3$};
 \path (0,0) coordinate(a) node[below]{$1$};

 \foreach \x in {b,c,d}{
          \draw [thick,postaction={decorate}] (a)--(\x);}

 \foreach \x in {a,b,c,d}{
          \fill [black](\x) circle (2.2pt);}
 \path (0,1.6) node{$G_{1}$};
\end{scope}

&

 \path (0,.8) node{$_{G_{1}(1,4)=G'_{2}(4,3)}$};

\draw (-1,0.5)--(1,0.5);

\begin{scope}[line cap=round,xshift=-0.25cm,yshift=-0.4cm,scale=0.5,decoration={markings,mark=at position 0.75 with {\arrow{Stealth}}}]

 \path (-.5,1) coordinate(b) node[above=-1pt]{$_2$};
 \path (0.5,1) coordinate(c) node[above=-1pt]{$_3$};
 \path (-.5,0) coordinate(d) node[below=-1pt]{$_1$};
 \path (1.25,0.5) coordinate(a) node[right=-1pt]{$_4$};

 \draw [thick,postaction={decorate}] (d)--(b);
 \draw [thick,postaction={decorate}] (d)--(c);

 \foreach \x in {a,b,c,d}{
          \fill [black](\x) circle (2.2pt);}
\end{scope}

&

\begin{scope}[line cap=round,scale=1,decoration={markings,mark=at position 0.6 with {\arrow{Stealth}}}]

 \path (-.5,1) coordinate(b) node[above]{$2$};
 \path (0.5,1) coordinate(c) node[above]{$3$};
 \path (-.5,0) coordinate(d) node[below]{$1$};
 \path (0.5,0) coordinate(a) node[below]{$4$};

 \foreach \x in {b,c}{
          \draw [thick,postaction={decorate}] (d)--(\x);}

 \draw [thick,postaction={decorate}] (a)--(c);

 \foreach \x in {a,b,c,d}{
          \fill [black](\x) circle (2.2pt);}
 \path (0,1.6) node{$G'_{2}$};
\end{scope}

&

 \path (0,.8) node{$_{G'_{2}(1,2)=G_{6}(2,3)}$};

\draw (-1,0.5)--(1,0.5);

\begin{scope}[line cap=round,xshift=-0.25cm,yshift=-0.4cm,scale=0.5,decoration={markings,mark=at position 0.65 with {\arrow{Stealth}}}]

 \path (1.25,0.5) coordinate(b) node[right=-1pt]{$_2$};
 \path (0.5,1) coordinate(c) node[above=-1pt]{$_3$};
 \path (-.5,0) coordinate(d) node[below=-1pt]{$_1$};
 \path (0.5,0) coordinate(a) node[below=-1pt]{$_4$};

 \draw [thick,postaction={decorate}] (a)--(c);
 \draw [thick,postaction={decorate}] (d)--(c);

 \foreach \x in {a,b,c,d}{
          \fill [black](\x) circle (2.2pt);}
\end{scope}

&

\begin{scope}[line cap=round,scale=1,decoration={markings,mark=at position 0.6 with {\arrow{Stealth}}}]

 \path (-1,0) coordinate(b) node[below]{$1$};
 \path (0,0) coordinate(c) node[below]{$4$};
 \path (1,0) coordinate(d) node[below]{$2$};
 \path (0,1) coordinate(a) node[above]{$3$};

 \foreach \x in {b,c,d}{
          \draw [thick,postaction={decorate}] (\x)--(a);}

 \foreach \x in {a,b,c,d}{
          \fill [black](\x) circle (2.2pt);}
 \path (0,1.6) node{$G_{6}$};
\end{scope}

\\

&
&

 \path (-.7,.5) node{$\begin{array}{c}_{G'_{2}(1,3)}\\{\shortparallel}\\_{G'_{3}(4,2)}\end{array}$};

\draw (0,-0.5)--(0,1.5);

\begin{scope}[line cap=round,xshift=.7cm,yshift=0.25cm,scale=0.5,decoration={markings,mark=at position 0.75 with {\arrow{Stealth}}}]

 \path (-.5,1) coordinate(b) node[above=-1pt]{$_2$};
 \path (0.5,1) coordinate(c) node[above=-1pt]{$_3$};
 \path (-.5,0) coordinate(d) node[below=-1pt]{$_1$};
 \path (0.5,0) coordinate(a) node[below=-1pt]{$_4$};

 \draw [thick,postaction={decorate}] (a)--(c);
 \draw [thick,postaction={decorate}] (d)--(b);

 \foreach \x in {a,b,c,d}{
          \fill [black](\x) circle (2.2pt);}
\end{scope}

&
&

\\

\begin{scope}[line cap=round,scale=1,decoration={markings,mark=at position 0.65 with {\arrow{Stealth}}}]

 \path (-1,1) coordinate(b) node[above]{$2$};
 \path (0,1) coordinate(c) node[above]{$3$};
 \path (1,1) coordinate(d) node[above]{$1$};
 \path (0,0) coordinate(a) node[below]{$4$};

 \foreach \x in {b,c,d}{
          \draw [thick,postaction={decorate}] (a)--(\x);}

 \foreach \x in {a,b,c,d}{
          \fill [black](\x) circle (2.2pt);}
 \path (0,-.6) node{$G_{4}$};
\end{scope}

&

 \path (0,.2) node{$_{G_{4}(4,1)=G'_{3}(1,2)}$};

\draw (-1,0.5)--(1,0.5);

\begin{scope}[line cap=round,xshift=-0.25cm,yshift=0.9cm,scale=0.5,decoration={markings,mark=at position 0.75 with {\arrow{Stealth}}}]

 \path (-.5,1) coordinate(b) node[above=-1pt]{$_2$};
 \path (0.5,1) coordinate(c) node[above=-1pt]{$_3$};
 \path (1.25,0.5) coordinate(d) node[right=-1pt]{$_1$};
 \path (0.5,0) coordinate(a) node[below=-1pt]{$_4$};

 \foreach \x in {b,c}{
          \draw [thick,postaction={decorate}] (a)--(\x);}

 \foreach \x in {a,b,c,d}{
          \fill [black](\x) circle (2.2pt);}
\end{scope}

&

\begin{scope}[line cap=round,scale=1,decoration={markings,mark=at position 0.6 with {\arrow{Stealth}}}]

 \path (-.5,1) coordinate(b) node[above]{$2$};
 \path (0.5,1) coordinate(c) node[above]{$3$};
 \path (-.5,0) coordinate(d) node[below]{$1$};
 \path (0.5,0) coordinate(a) node[below]{$4$};

 \foreach \x in {b,c}{
          \draw [thick,postaction={decorate}] (a)--(\x);}

 \draw [thick,postaction={decorate}] (d)--(b);

 \foreach \x in {a,b,c,d}{
          \fill [black](\x) circle (2.2pt);}
 \path (0,-.6) node{$G'_{3}$};
\end{scope}

&

 \path (0,.2) node{$_{G'_{3}(4,3)=G_{5}(3,2)}$};

\draw (-1,0.5)--(1,0.5);

\begin{scope}[line cap=round,xshift=-0.25cm,yshift=0.9cm,scale=0.5,decoration={markings,mark=at position 0.65 with {\arrow{Stealth}}}]

 \path (-.5,1) coordinate(b) node[above=-1pt]{$_2$};
 \path (1.25,0.5) coordinate(c) node[right=-1pt]{$_3$};
 \path (-.5,0) coordinate(d) node[below=-1pt]{$_1$};
 \path (0.5,0) coordinate(a) node[below=-1pt]{$_4$};

 \foreach \x in {a,d}{
          \draw [thick,postaction={decorate}] (\x)--(b);}

 \foreach \x in {a,b,c,d}{
          \fill [black](\x) circle (2.2pt);}
\end{scope}

&

\begin{scope}[line cap=round,scale=1,decoration={markings,mark=at position 0.6 with {\arrow{Stealth}}}]

 \path (-1,0) coordinate(b) node[below]{$3$};
 \path (0,0) coordinate(c) node[below]{$1$};
 \path (1,0) coordinate(d) node[below]{$4$};
 \path (0,1) coordinate(a) node[above]{$2$};

 \foreach \x in {b,c,d}{
          \draw [thick,postaction={decorate}] (\x)--(a);}

 \foreach \x in {a,b,c,d}{
          \fill [black](\x) circle (2.2pt);}
 \path (0,-.6) node{$G_{5}$};
\end{scope}

\\
};

\end{tikzpicture}
\par\end{center}\caption{\label{fig:bigraphs2}Face-Fitting Directed Bigraphs, III}
\end{figure}
\end{example}

\begin{rem}
$\left\{ G_{2},G_{3}\right\} $ and $\left\{ G_{2}',G_{3}'\right\} $
are two unique pairs of directed bigraphs that can be added to $\left\{ G_{1},G_{4},G_{5},G_{6}\right\} $
in order to extend $\left\{ \mathrm{BP}_{\Gamma[G_{l}]}:l=1,4,5,6\right\} $
to a matroid tiling.
\end{rem}

\section{Biconvex Polytope as Cell of Tropical Linear Space}

In this short section, we prove a biconvex polytope arises as a cell
of a tropical linear space. Before doing so, we give a brief review
of the tropical linear spaces. Readers are referred to \cite{MS15}
for more.\medskip{}

Let $M$ be a rank-$k$ connected matroid on $S$ with $n=\left|S\right|$,
and $\mathcal{B}$ its base collection. The \textbf{Dressian}\footnote{This is named after Andreas Dress due to his original work on ``valuated
matroids'' with Walter Wenzel, \cite{Dress}. One might think this
should be called the ``tropical Grassmannian''. However, ``being
generated'' for ideals is not transferred via tropicalization, and
the tropical Grassmannian is defined as the intersection of \emph{all}
tropical hypersurfaces coming from tropicalized elements of the Plücker
ideal.} $\mathrm{Dr}_{M}$ of $M$ is the intersection of the tropical hypersurfaces
in $\mathbb{R}^{\mathcal{B}}/\mathbb{R}\mathbbm1$ defined by the\textbf{
tropicalized Plücker relations }for $M$. Here a tropicalized Plücker
relation for $M$ is a tropical polynomial 
\[
\op_{j\in\tau'}\left.z_{\sigma\cup\left\{ j\right\} }\odot z_{\tau-\left\{ j\right\} }\right.
\]
for a coordinate vector $\mathbf{z}=\left(z_{B}\right)_{B\in\mathcal{B}}$
in $\mathbb{R}^{\mathcal{B}}/\mathbb{R}\mathbbm1$, with the following
properties: 
\begin{itemize}
\item $\sigma$ is an independent set of $M$ of size $k-1$,
\item $\tau$ is a rank-$k$ subset of $S$ of size $k+1$ with $\sigma\nsubseteq\tau$,
\item $\tau'$ is the set of $j$ in $\tau$ such that both $\sigma\cup\left\{ j\right\} $
and $\tau-\left\{ j\right\} $ are bases of $M$.
\end{itemize}
Fix any point $\mathbf{z}$ in $\mathrm{Dr}_{M}\subset\mathbb{R}^{\mathcal{B}}/\mathbb{R}\mathbbm1$.
For any rank-$k$ subset $\tau$ of $S$ of size $k+1$ with $\tau'=\left\{ j\in\tau:\tau-\left\{ j\right\} \in\mathcal{B}\right\} $,
we denote by $L_{\tau}(\mathbf{z})$ the tropical hyperplane in $\mathbb{R}^{S}/\mathbb{R}\mathbbm1$
that is defined by a tropical polynomial 
\[
\op_{j\in\tau'}\left.z_{\tau-\left\{ j\right\} }\odot x_{j}\right.
\]
 for a coordinate vector $\mathbf{x}=\left(x_{j}\right)_{j\in S}\in\mathbb{R}^{S}/\mathbb{R}\mathbbm1$.
Now, define 
\[
L_{\mathbf{z}}:=\bigcap_{\tau}L_{\tau}\left(\mathbf{z}\right)
\]
 which is a $\left(k-1\right)$-dimensional balanced contractible
polyhedral complex in $\mathbb{R}^{S}/\mathbb{R}\mathbbm1$, and called
a \textbf{tropical linear space}.\medskip{}

Every $\mathbf{z}\in\mathbb{R}^{\mathcal{B}}/\mathbb{R}\mathbbm1$
induces a \emph{regular} polyhedral subdivision of the base polytope
$\mathrm{BP}_{M}$. Furthermore:
\begin{prop}
A point $\mathbf{z}$ of $\mathbb{R}^{\mathcal{B}}/\mathbb{R}\mathbbm1$
is contained in $\mathrm{Dr}_{M}$ if and only if the regular subdivision
of $\mathrm{BP}_{M}$ that $\mathbf{z}$ induces is a matroid subdivision.
\end{prop}

Now, let $P=\mathrm{tconv}\left(\mathbf{v}_{1},\dots,\mathbf{v}_{k}\right)\subset\mathbb{R}^{k}/\mathbb{R}\mathbbm1$
be any maximal biconvex polytope. Then, it immediately follows that
for the regular matroid subdivision $\Sigma^{\ast}$ of $\Delta_{S}^{\left|S\right|-k}$
that is constructed in Subsection \ref{subsec:MAIN}, $P$ is tropically
isomorphic to the bounded part of a tropical linear space that is
a polyhedral complex dual to $\Sigma^{\ast}$.\medskip{}

The above statement holds for any biconvex polytope because every
non-maximal biconvex polytope is obtained as a tropical degeneration
of a maximal one.

\section{Subdividing Hypersimplices: Rank-$4$ Case}

We employ our theory to manually conduct all the computations with
pen and paper without resorting to computers, which reflects the power
of our theory. Fix an underlying partition $S=\bigcup_{i\in\left[k\right]}S_{i}$
with $\left|S_{i}\right|\ge k$ for all $i\in\left[k\right]$. We
first borrow necessary tools from \cite{j-hope}, and then work out
the rank-$4$ case.
\begin{lem}
\label{lem:Cutting-1}Let $S$ be a finite set and $k$ an integer
with $1\le k<\left|S\right|$.
\begin{enumerate}
\item \cite[Lemma 4.9]{j-hope} Let $F\subset S$ be a proper subset of
size $\ge2$ and $\rho$ a positive integer with 
\[
k+\left|F\right|-\left|S\right|<\rho<\min\left\{ k,\left|F\right|\right\} .
\]
Then, $\mathrm{P}_{1}=\Delta_{S}^{k}\cap\left\{ x\left(F\right)\le\rho\right\} $
and $\mathrm{P}_{2}=\Delta_{S}^{k}\cap\left\{ x\left(S-F\right)\le k-\rho\right\} $
are full-dimensional base polytopes that are face-fitting, which form
a matroid subdivision of $\Delta_{S}^{k}$.
\item \cite[Lemma 4.9]{j-hope} Let $\mathrm{P}_{1}=\mathrm{BP}_{M_{1}}$
and $\mathrm{P}_{2}=\mathrm{BP}_{M_{2}}$, then $F$ and $S-F$ are
the unique non-degenerate flats of size $\ge2$ of $M_{1}$ and $M_{2}$,
respectively, with ranks $\rho$ and $k-\rho$.
\item \label{enu:subdividing}\cite[Corollary 4.8]{j-hope} For a partition
$S=\bigcup_{j\in\left[k\right]}S_{j}$, cutting $\Delta_{S}^{k}$
with all $k$ hyperplanes of the form $\left\{ x\left(S_{j}\right)=1\right\} $
produces a matroid subdivision.
\end{enumerate}
\end{lem}

\begin{example}
\label{exa:4-subdivision}Let $k=4$. By cutting $\Delta_{S}^{4}$
with all $4$ hyperplanes $\left\{ x\left(S_{i}\right)=1\right\} $
we get a matroid subdivision $\Sigma$. See Figure \ref{fig:Splits}
for the quotient subdivision $\pi\left(\Sigma\right)$. We have $\Gamma\left[V\right]=\bigoplus_{i\in\left[4\right]}U_{S_{i}}^{1}$
for $V=\left[4\right]$, and 
\[
\mathrm{Q}=\mathrm{BP}_{\bigoplus_{i\in\left[4\right]}U_{S_{i}}^{1}}
\]
 is the maximum common face of all members of $\Sigma$ with $\pi\left(Q\right)=\left(1,1,1,1\right)$.
See Figure \ref{fig:Blocks0} for the typical $3$ (quotient) polytopes
of $\Sigma$ and the corresponding directed graphs where the graphs
of Figure \ref{fig:Blocks0} are from Figure \ref{fig:bigraphs1}.
\begin{figure}[th]
\noindent \centering{}\noindent \begin{center}
\def\sizea{0.45}

\begin{tikzpicture}[blowup line/.style={line width=2pt,grey},font=\scriptsize]

\begin{scope}[line cap=round,rotate=0,scale=\sizea,xshift=0cm,yshift=0cm]

\pgfsetxvec{\pgfpointxyz{1.5}{0}{0.5}}
\pgfsetzvec{\pgfpointxyz{-0.2}{0}{1.5}}
\pgfsetyvec{\pgfpointxyz{0}{1.25}{0}}

\fill [nearly opaque,red]
 (0,2.449,0)--++(-1.5,0,-2.598)--++(0,-2.449,-1.732)--++(1.5,0,2.598)--cycle;
\fill [nearly opaque,blue]
 (0,2.449,0)++(-3,0,0)--++(3,0,0)--++(1.5,0,-2.598)--++(-1.5,0,-2.598)--cycle;
\fill [nearly opaque,red]
 (0,2.449,0)--++(-1.5,0,-2.598)--++(0,2.449,1.732)--cycle;
\fill [nearly opaque,green]
 (0,2.449,0)++(1.5,2.449,-0.866)--++(0,-2*2.449,-2*1.732)--++(-2*1.5,0,2*2.598)--cycle;
\fill [nearly opaque,yellow]
 (0,2.449,0)++(1.5,2.449,-0.866)--++(-1.5,2.449,-0.866)--++(-4.5,-3*2.449,3*0.866)--++(3,0,0)--cycle;
\fill [nearly opaque,red]
 (0,2.449,0)--++(-1.5,2.449,-0.866)--++(0,2.449,1.732)--++(1.5,-2.449,0.866)--cycle;
\fill [nearly opaque,blue]
 (0,2.449,0)--++(-1.5,0,2.598)--++(-3,0,0)--++(1.5,0,-2.598)--cycle;
\fill [nearly opaque,green]
 (0,2.449,0)++(1.5,2.449,-0.866)--++(0,2.449,1.732)--++(-3*1.5,-3*2.449,3*0.866)--++(1.5,0,-2.598)--cycle;
\fill [nearly opaque,blue]
 (0,2.449,0)--++(3,0,0)--++(-1.5,0,-2.598)--cycle;
\fill [nearly opaque,red]
 (0,2.449,0)--++(0,-2.449,-1.732)--++(1.5,0,2.598)--cycle;
\fill [nearly opaque,yellow]
 (0,2.449,0)++(-1.5,-2.449,0.866)--++(6,0,0)--++(-3,2*2.449,-2*0.866)--cycle;
\fill [nearly opaque,red]
 (0,2.449,0)--++(1.5,0,2.598)--++(1.5,-2.449,0.866)--++(-1.5,0,-2.598)--cycle;
\fill [nearly opaque,blue]
 (0,2.449,0)--++(-1.5,0,2.598)--++(6,0,0)--++(-1.5,0,-2.598)--cycle;
\fill [nearly opaque,red]
 (0,2.449,0)--++(1.5,0,2.598)--++(-1.5,2.449,-0.866)--cycle;

\draw [line join=round,line cap=round,line width=0.15pt] (0,0,-2*3.464)--++(-4*1.5,0,4*2.598)
               (0,0,-2*3.464)++(1.5,0,2.598)--++(-3*1.5,0,3*2.598)
               (0,0,-2*3.464)++(0,2.449,1.732)--++(-3*1.5,0,3*2.598);

\draw [line join=round,line cap=round,line width=0.15pt] (0,0,-2*3.464)++(-1.5,0,2.598)
--++(0,3*2.449,3*1.732)
               (0,0,-2*3.464)++(-1.5,0,2.598)--++(3*1.5,0,3*2.598);

\draw [line join=round,line cap=round,line width=0.15pt] (0,0,-2*3.464)++(-3*1.5,0,3*2.598)
--++(3*3,0,0)
               (0,0,-2*3.464)++(-3*1.5,0,3*2.598)--++(3*1.5,3*2.449,-3*0.866);

\draw [line width=0.25pt]
 (0,2.449,0)--++(-1.5,0,2.598)
 (0,2.449,0)--++(1.5,0,2.598)
 (0,2.449,0)--++(3,0,0)

 (0,2.449,0)--++(1.5,0,-2.598)
 (0,2.449,0)--++(-1.5,0,-2.598)
 (0,2.449,0)--++(-3,0,0)

 (0,2.449,0)--++(-1.5,-2.449,0.866)
 (0,2.449,0)--++(1.5,-2.449,0.866)
 (0,2.449,0)--++(0,-2.449,-1.732)

 (0,2.449,0)--++(1.5,2.449,-0.866)
 (0,2.449,0)--++(-1.5,2.449,-0.866)
 (0,2.449,0)--++(0,2.449,1.732)
;

\draw [line width=1pt,white] (-3*1.5,2.449,3*0.866)--++(3*3,0,0)--++(-3*1.5,0,-3*2.598)
               (-3,0,3.464)--++(3*1.5,3*2.449,-3*0.866)--++(0,-3*2.449,-3*1.732);

\draw [line width=1pt,white] (-1.5,3*2.449,0.866)--++(3*1.5,-3*2.449,3*0.866)
              (0,3*2.449,-1.732)--++(3*1.5,-3*2.449,3*0.866);

\draw [line width=1pt,white] (0,4*2.449,0)--++(-4*1.5,-4*2.449,4*0.866)--++(4*3,0,0)--cycle;
\draw [line width=1pt,white] (0,4*2.449,0)--++(4*1.5,-4*2.449,4*0.866);

\draw [line width=1pt,white] (0,4*2.449,0)--(0,0,-2*3.464)--++(4*1.5,0,4*2.598);

\draw [line join=round,line cap=round,line width=0.4pt] (-3*1.5,2.449,3*0.866)--++(3*3,0,0)
               (-3,0,3.464)--++(3*1.5,3*2.449,-3*0.866);

\draw [line join=round,line cap=round,line width=0.3pt] (-3*1.5,2.449,3*0.866)++(3*3,0,0)--++(-3*1.5,0,-3*2.598)
               (-3,0,3.464)++(3*1.5,3*2.449,-3*0.866)--++(0,-3*2.449,-3*1.732);

\draw [line join=round,line cap=round,line width=0.4pt] (-1.5,3*2.449,0.866)--++(3*1.5,-3*2.449,3*0.866);

\draw [line join=round,line cap=round,line width=0.3pt] (0,3*2.449,-1.732)--++(3*1.5,-3*2.449,3*0.866);

\draw [line join=round,line cap=round,line width=0.4pt] (0,4*2.449,0)--++(-4*1.5,-4*2.449,4*0.866)--++(4*3,0,0)--cycle;

\draw [line join=round,line cap=round,line width=0.3pt] (0,4*2.449,0)--(0,0,-2*3.464)--++(4*1.5,0,4*2.598);

\pgfpathcircle{\pgfpointxyz{0}{2.449}{0}}{6pt}
\pgfusepath{fill}

\end{scope}

\end{tikzpicture}
\par\end{center}\caption{\label{fig:Splits}A quotient matroid subdivision of rank $4$.}
\end{figure}

\begin{figure}[th]
\noindent \centering{}\noindent \begin{center}
\def\sizea{0.35}

\begin{tikzpicture}[blowup line/.style={line width=2pt,grey},font=\footnotesize]

\matrix[column sep=0.8cm, row sep=0.6cm]{

\begin{scope}[line cap=round,xshift=0cm,yshift=-0.2cm,rotate=0,scale=\sizea]

\pgfsetxvec{\pgfpointxyz{1.5}{0}{0.5}}
\pgfsetzvec{\pgfpointxyz{-0.2}{0}{1.5}}
\pgfsetyvec{\pgfpointxyz{0}{1.25}{0}}

\draw [line join=round,line cap=round,line width=0.2pt]
 (0,2.449,0)++(-3,0,0)--++(-1.5,-2.449,0.866)--++(3,0,0)
 (0,2.449,0)++(-1.5,0,2.598)++(-1.5,-2.449,0.866)++(-3,0,0)--++(1.5,0,-2.598);

\draw [line width=3pt,white]
 (0,2.449,0)++(-3,0,0)++(-1.5,0,2.598)--++(3,0,0)--++(-1.5,-2.449,0.866);

\draw [line join=round,line cap=round,line width=0.3pt]
 (0,2.449,0)--++(-3,0,0)--++(-1.5,0,2.598)--++(3,0,0)--++(-1.5,-2.449,0.866)--++(1.5,0,-2.598)--cycle
 (0,2.449,0)--++(-1.5,0,2.598)++(-1.5,-2.449,0.866)--++(-3,0,0)++(1.5,0,-2.598)++(-1.5,0,2.598)--++(1.5,2.449,-0.866);

\pgfpathcircle{\pgfpointxyz{0}{2.449}{0}}{6pt}
\pgfusepath{fill}

\path (-4.2,4) node[]{${\displaystyle{
\bigcap_{j\in\left[4\right]-\left\{1\right\} }\left\{ x(S_{j})\ge1\right\}}}$};

\end{scope}

&

\begin{scope}[line cap=round,xshift=0cm,yshift=-0.8cm,rotate=0,scale=\sizea]

\pgfsetxvec{\pgfpointxyz{1.5}{0}{0.5}}
\pgfsetzvec{\pgfpointxyz{-0.2}{0}{1.5}}
\pgfsetyvec{\pgfpointxyz{0}{1.25}{0}}

\draw [line join=round,line cap=round,line width=0.2pt]
 (0,2.449,0)--++(-1.5,0,2.598);
\draw [line width=3pt,white]
 (0,2.449,0)++(1.5,0,2.598)--++(-1.5,2.449,-0.866);

\draw [line join=round,line cap=round,line width=0.3pt]
 (0,2.449,0)--++(1.5,0,2.598)
 (0,2.449,0)--++(0,2.449,1.732);

\draw [line join=round,line cap=round,line width=0.3pt]
 (0,2.449,0)++(0,2.449,1.732)--++(-1.5,-2.449,0.866)--++(3,0,0)--cycle;

\pgfpathcircle{\pgfpointxyz{0}{2.449}{0}}{6pt}
\pgfusepath{fill}

\path (-1.6,5.8) node[]{${\displaystyle{
\bigcap_{j\in\left[4\right]-\left\{2\right\} }\left\{ x(S_{j})\le1\right\}}}$};

\end{scope}

&

\begin{scope}[line cap=round,rotate=0,scale=\sizea,xshift=0cm,yshift=0cm]

\pgfsetxvec{\pgfpointxyz{1.5}{0}{0.5}}
\pgfsetzvec{\pgfpointxyz{-0.2}{0}{1.5}}
\pgfsetyvec{\pgfpointxyz{0}{1.25}{0}}

\draw [line join=round,line cap=round,line width=0.2pt]
 (0,2.449,0)--++(-1.5,-2.449,0.866)--++(3,0,0)
 (0,2.449,0)++(-1.5,-2.449,0.866)--++(-1.5,0,2.598);

\draw [line width=3pt,white]
 (0,2.449,0)++(-1.5,0,2.598)--++(3,0,0)--++(1.5,-2.449,0.866);

\draw [line join=round,line cap=round,line width=0.3pt]
 (0,2.449,0)--++(1.5,-2.449,0.866);

\draw [line join=round,line cap=round,line width=0.3pt]
 (0,2.449,0)--++(-1.5,0,2.598)--++(3,0,0)--cycle
 (0,2.449,0)++(-1.5,0,2.598)--++(-1.5,-2.449,0.866)--++(6,0,0)--++(-1.5,2.449,-0.866)++(1.5,-2.449,0.866)--++(-1.5,0,-2.598);

\pgfpathcircle{\pgfpointxyz{0}{2.449}{0}}{6pt}
\pgfusepath{fill}

\path (-2.6,4) node[]{${\displaystyle{\mathbf{P}:=\begin{Bmatrix}x(S_{1})\le1,\:x(S_{2})\ge1\\x(S_{4})\le1,\:x(S_{3})\ge1\end{Bmatrix}}}$};

\end{scope}

\\

\begin{scope}[line cap=round,xshift=-2cm,scale=1,decoration={markings,mark=at position 0.65 with {\arrow{Stealth}}}]

 \path (-1,1) coordinate(b) node[above]{$2$};
 \path (0,1) coordinate(c) node[above]{$3$};
 \path (1,1) coordinate(d) node[above]{$4$};
 \path (0,0) coordinate(a) node[below]{$1$};

 \foreach \x in {b,c,d}{
          \draw [thick,postaction={decorate}] (a)--(\x);}

 \foreach \x in {a,b,c,d}{
          \fill [black](\x) circle (2.2pt);}
\path (0,-.7) node[]{$G_{1}$};
\end{scope}

&

\begin{scope}[line cap=round,xshift=-0.7cm,scale=1,decoration={markings,mark=at position 0.6 with {\arrow{Stealth}}}]

 \path (-1,0) coordinate(b) node[below]{$1$};
 \path (0,0) coordinate(c) node[below]{$3$};
 \path (1,0) coordinate(d) node[below]{$4$};
 \path (0,1) coordinate(a) node[above]{$2$};

 \foreach \x in {b,c,d}{
          \draw [thick,postaction={decorate}] (\x)--(a);}

 \foreach \x in {a,b,c,d}{
          \fill [black](\x) circle (2.2pt);}
\path (0,-.7) node[]{$G_{5}$};
\end{scope}

&

\begin{scope}[line cap=round,xshift=-0.9cm,scale=1,decoration={markings,mark=at position 0.7 with {\arrow{Stealth}}}]

 \path (-.5,1) coordinate(b) node[above]{$2$};
 \path (0.5,1) coordinate(c) node[above]{$3$};
 \path (-.5,0) coordinate(d) node[below]{$1$};
 \path (0.5,0) coordinate(a) node[below]{$4$};

 \foreach \x in {b,c}{
          \draw [thick,postaction={decorate}] (a)--(\x);}

 \draw [thick,postaction={decorate}] (d)--(b);
 \draw [thick,postaction={decorate}] (d)--(c);

 \foreach \x in {a,b,c,d}{
          \fill [black](\x) circle (2.2pt);}
\path (0,-.7) node[]{$G_{2}\cup G_{3}$};
\end{scope}

\\
};
\end{tikzpicture}
\par\end{center}\caption{\label{fig:Blocks0}Three base polytopes of the matroid subdivision
of Example \ref{exa:4-subdivision}, Figure \ref{fig:Splits}, with
the corresponding directed graphs.}
\end{figure}
\end{example}

Observe that in Example \ref{exa:4-subdivision}, by cutting $\Delta_{S}^{4}$
with $4$ hyperplanes $\left\{ x\left(S-S_{i}\right)=3\right\} $
instead, we obtain the same matroid subdivision because:
\[
\left\{ x\left(S-S_{i}\right)=3\right\} =\left\{ x\left(S_{i}\right)=1\right\} .
\]
 Observe also that the $4$ polytopes 
\[
\bigcap_{j\in\left[4\right]-\left\{ i\right\} }\left\{ x\left(S_{j}\right)\ge1\right\} 
\]
 whose quotients are parallelepipeds positioned at the $4$ corners
of the tetrahedron of Figure \ref{fig:Splits}, cannot be further
cut into full-dimensional base polytopes, and neither can the following
$4$ polytopes whose quotients are smaller tetrahedra positioned at
the centers of the 4 facets of the tetrahedron
\[
\bigcap_{j\in\left[4\right]-\left\{ i\right\} }\left\{ x\left(S_{j}\right)\le1\right\} .
\]

Now, we want to obtain a matroid subdivision of $\Delta_{S}^{4}$
with $\mathrm{Q}=\mathrm{BP}_{\bigoplus_{i\in\left[4\right]}U_{S_{i}}^{1}}$
being a common cell that is finer than the matroid subdivision of
Example \ref{exa:4-subdivision}. Then, we know that we have to cut
each of its $6$ polytopes whose quotients are positioned in the middle
of the $6$ edges of the tetrahedron, with hyperplanes of the form
\[
\left\{ x\left(A\right)=2\right\} \quad\text{for a subset }A\subset S
\]
 because we have already cut $\Delta_{S}^{4}$ with all hyperplanes
of the form $\left\{ x\left(A\right)=1\right\} $ or $\left\{ x\left(A\right)=3\right\} $
and all 6 of those polytopes contain $\mathrm{Q}$, cf. Lemma~\ref{lem:GMHA}(\ref{enu:inequalities}).
Further, each $A$ is a rank-$2$ flat of the matroid $\bigoplus_{i\in\left[4\right]}U_{S_{i}}^{1}$,
and is a union of precisely two of $S_{1},S_{2},S_{3}$, and $S_{4}$.
Thus the hyperplanes we cut with have the following form 
\begin{equation}
\left\{ x\left(S_{J}\right)=2\right\} \quad\text{for a size-}2\text{ subset }J\subset\left[4\right].\label{eq:cutting-w-2}
\end{equation}
 Moreover, by the following lemma, the number of such cutting hyperplanes
cannot exceed $1$, and hence is $1$.
\begin{lem}
\label{lem:splits-D(4,n)}Let $M$ be a rank-$4$ connected matroid
with a rank-$2$ non-degenerate flat $F$. If $L$ is a non-degenerate
flat such that $\mathrm{BP}_{M\left(F\right)}\cap\mathrm{BP}_{M\left(L\right)}$
is a codimension-$2$ face of $\mathrm{BP}_{M}$ that is not contained
in a coordinate hyperplane, then $r\left(L\right)\neq2$.
\end{lem}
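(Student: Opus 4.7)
The plan is to invoke the trichotomy of Lemma \ref{lem:GMHA}(\ref{enu:codim-2}) applied to the pair $(F,L)$: since $M$ is inseparable of rank $4\ge 3$ and we would be assuming $r(F)=r(L)=2$ (so the rank comparison $r(F)\ge r(L)$ is satisfied), precisely one of the three structural cases $F\cap L=\emptyset$, $F\cup L=S$, or $F\supsetneq L$ must occur. I would argue that each of the three cases is incompatible with the hypotheses, yielding $r(L)\ne 2$ by contradiction.

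The case $F\supsetneq L$ is the easiest to dispense with: pick any $x\in F\setminus L$; since $L$ is a flat, $r(L\cup\{x\})\ge r(L)+1=3$, but $L\cup\{x\}\subseteq F$ forces $r(L\cup\{x\})\le r(F)=2$, a contradiction. So strict containment of flats with equal rank is impossible.

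The case $F\cap L=\emptyset$ would force $M\mid_{F\cup L}=M\mid_F\oplus M\mid_L$, hence $r(F\cup L)=4=r(M)$, so $F\cup L$ spans $M$. Then $M/(F\cup L)$ has rank $0$ on ground set $S\setminus(F\cup L)$; for the base polytope $\mathrm{BP}_{M(F\cup L)}$ to be loopless, this rank-$0$ matroid must be loopless, which forces $F\cup L=S$. But then $M=M\mid_F\oplus M\mid_L$ is a nontrivial decomposition (both $|F|,|L|\ge r(F)=r(L)=2$), contradicting the inseparability of $M$.

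The case $F\cup L=S$ is ruled out by a short rank computation: submodularity gives $4=r(F\cup L)\le r(F)+r(L)-r(F\cap L)=4-r(F\cap L)$, so $r(F\cap L)=0$, and since $\mathrm{BP}_{M(F\cap L)}$ is loopless the restriction $M\mid_{F\cap L}$ is loopless, forcing $F\cap L=\emptyset$; this collapses into the already-excluded Case 1. The main (and only real) obstacle is keeping the bookkeeping of the three cases consistent with the exclusivity of the trichotomy, but once that is in place the conclusion is forced; notably, the assumption $r(F)=2$ only matters in that it makes $F\cup L$ spanning in Case 1 and makes the submodularity bound sharp in Case 2, which is exactly how the structure of $M$ is broken.
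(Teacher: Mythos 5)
Your proof is correct and follows essentially the same route as the paper's: both arguments pivot on the trichotomy of Lemma~\ref{lem:GMHA}(\ref{enu:codim-2}) and obtain the contradiction from looplessness of the common face together with the inseparability of $M$. The only difference is organizational---the paper first narrows the rank profile via the modular-pair equation of Lemma~\ref{lem:GMHA}(\ref{enu:mod-pair}) and lands the final contradiction on a loop of $M(F\cup L)$, whereas you case-split directly on the trichotomy and land it on inseparability---but the mathematical content is the same.
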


\begin{proof}
Since $\mathrm{BP}_{M\left(F\right)}\cap\mathrm{BP}_{M\left(L\right)}=\mathrm{BP}_{M\left(F\right)\cap M\left(L\right)}$
is nonempty, $\left\{ F,L\right\} $ is a \emph{modular pair} by Lemma~\ref{lem:GMHA}(\ref{enu:mod-pair}).
Also, $M\left(F\right)\cap M\left(L\right)$ is a loopless face matroid.
To prove by contrapositive, suppose $r\left(L\right)=2$. Then, $r\left(L\cap F\right)<2$
since $L\neq F$. Since $\left\{ F,L\right\} $ is a modular pair,
we have either $r\left(F\cap L\right)=0$ and $r\left(F\cup L\right)=4$,
or $r\left(F\cap L\right)=1$ and $r\left(F\cup L\right)=3$. In the
former case, $F\cap L=\emptyset$ which implies that $F\cup L$ is
a non-flat of rank $4$ by Lemma~\ref{lem:GMHA}(\ref{enu:codim-2}),
but then $M\left(F\right)\cap M\left(L\right)=M\left(F\cup L\right)$
has a loop, a contradiction. In the latter case, $F\cap L\neq\emptyset$
and $F\cup L\neq S$. Again by Lemma~\ref{lem:GMHA}(\ref{enu:codim-2}),
$F\subsetneq L$ or $L\subsetneq F$, which contradicts $r\left(F\right)=r\left(L\right)=2$.
Thus we conclude $r\left(L\right)\neq2$.
\end{proof}
By Lemmas \ref{lem:Cutting-1} and \ref{lem:splits-D(4,n)}, for any
matroid subdivision of our interest, the number of its members does
not exceed $4+4+\mathbf{2}\cdot6=20$ which is the maximum number
of vertices of a $3$-dimensional biconvex polytope. Moreover, if
we cut each of those $6$ polytopes with a hyperplane of the form
(\ref{eq:cutting-w-2}), then we obtain a matroid subdivision with
$20$ members, see Figures \ref{fig:Blocks1} and \ref{fig:Blocks2};
hence the maximum is attained.
\begin{figure}[th]
\noindent \begin{centering}
\noindent \begin{center}
\def\sizea{0.35}
\def\size{0.25}
\def\ratioa{0.901}
\def\ratior{0.851}
\def\ratiow{0.761}
\def\ratioe{0.881}
\colorlet{grey}{black!37}

\begin{tikzpicture}[blowup line/.style={line width=2pt,grey},font=\scriptsize]

\matrix[column sep=0.4cm, row sep=0.8cm]{

&

\begin{scope}[line cap=round,rotate=0,scale=\sizea,xshift=0cm,yshift=0cm]

\pgfsetxvec{\pgfpointxyz{1.5}{0}{0.5}}
\pgfsetzvec{\pgfpointxyz{-0.2}{0}{1.5}}
\pgfsetyvec{\pgfpointxyz{0}{1.25}{0}}

\draw [line join=round,line cap=round,line width=0.2pt]
 (0,2.449,0)--++(-1.5,-2.449,0.866)--++(3,0,0)
 (0,2.449,0)++(-1.5,-2.449,0.866)--++(-1.5,0,2.598);

\draw [line width=3pt,white]
 (0,2.449,0)++(-1.5,0,2.598)--++(3,0,0)--++(1.5,-2.449,0.866);

\draw [line join=round,line cap=round,line width=0.3pt]
 (0,2.449,0)--++(1.5,-2.449,0.866);

\draw [line join=round,line cap=round,line width=0.3pt]
 (0,2.449,0)--++(-1.5,0,2.598)--++(3,0,0)--cycle
 (0,2.449,0)++(-1.5,0,2.598)--++(-1.5,-2.449,0.866)--++(6,0,0)--++(-1.5,2.449,-0.866)++(1.5,-2.449,0.866)--++(-1.5,0,-2.598);

\draw [line join=round,line cap=round,line width=1.2pt,semitransparent,cyan]
 (0,2.449,0)--++(1.5,0,2.598)--++(-1.5,-2.449,0.866)--++(-1.5,0,-2.598)--++(1.5,2.449,-0.866);

\fill [semitransparent,cyan]
 (0,2.449,0)--++(1.5,0,2.598)--++(-1.5,-2.449,0.866)--++(-1.5,0,-2.598)--++(1.5,2.449,-0.866);

\draw [line join=round,line cap=round,line width=0.3pt]
 (0,2.449,0)--++(1.5,0,2.598)--++(-3,0,0)
 (0,2.449,0)++(1.5,0,2.598)--++(1.5,-2.449,0.866);

\pgfpathcircle{\pgfpointxyz{0}{2.449}{0}}{6pt}
\pgfusepath{fill}

\end{scope}

&

\\\hline
\\

\begin{scope}[line cap=round,rotate=0,xshift=.5cm,yshift=0cm,scale=\sizea]

\pgfsetxvec{\pgfpointxyz{1.5}{0}{0.5}}
\pgfsetzvec{\pgfpointxyz{-0.2}{0}{1.5}}
\pgfsetyvec{\pgfpointxyz{0}{1.25}{0}}

\draw [line join=round,line cap=round,line width=0.3pt]
 (0,2.449,0)--++(1.5,0,2.598)--++(-1.5,-2.449,0.866)--++(-1.5,0,-2.598)--++(1.5,2.449,-0.866);

\draw [line width=3pt,white]
 (0,2.449,0)++(-1.5,0,2.598)++(1,0,0)--++(1.7,0,0);

\draw [line join=round,line cap=round,line width=0.3pt]
 (0,2.449,0)--++(-1.5,0,2.598)--++(3,0,0)
 (0,2.449,0)++(-1.5,0,2.598)--++(-1.5,-2.449,0.866)
 (0,2.449,0)++(-1.5,-2.449,0.866)--++(-1.5,0,2.598)--++(3,0,0);

\pgfpathcircle{\pgfpointxyz{0}{2.449}{0}}{6pt}
\pgfusepath{fill}

\path (-2.4,4.2) node[]{${\displaystyle{\begin{Bmatrix}x(S_{4})\le1\\x(S_{3})\ge1\\x(S_{1}\cup S_{3})\le2\end{Bmatrix}}}$};

\end{scope}

&

\begin{scope}[line cap=round,rotate=0,xshift=0cm,yshift=0cm,scale=\sizea]

\pgfsetxvec{\pgfpointxyz{1.5}{0}{0.5}}
\pgfsetzvec{\pgfpointxyz{-0.2}{0}{1.5}}
\pgfsetyvec{\pgfpointxyz{0}{1.25}{0}}

\fill [semitransparent,cyan]
 (0,2.449,0)--++(1.5,0,2.598)--++(-1.5,-2.449,0.866)--++(-1.5,0,-2.598)--++(1.5,2.449,-0.866);

\draw [line join=round,line cap=round,line width=1.2pt,semitransparent,cyan]
 (0,2.449,0)--++(1.5,0,2.598)--++(-1.5,-2.449,0.866)--++(-1.5,0,-2.598)--++(1.5,2.449,-0.866);

\pgfpathcircle{\pgfpointxyz{0}{2.449}{0}}{6pt}
\pgfusepath{fill}

\path (-1.5,4.2) node[]{${\displaystyle{\begin{Bmatrix}x(S_{4})\le1\\x(S_{3})\ge1\\x(S_{1}\cup S_{3})=2\end{Bmatrix}=\begin{Bmatrix}x(S_{1})\le1\\x(S_{2})\ge1\\x(S_{2}\cup S_{4})=2\end{Bmatrix}}}$};

\end{scope}

&

\begin{scope}[line cap=round,rotate=0,xshift=-.2cm,yshift=0cm,scale=\sizea]

\pgfsetxvec{\pgfpointxyz{1.5}{0}{0.5}}
\pgfsetzvec{\pgfpointxyz{-0.2}{0}{1.5}}
\pgfsetyvec{\pgfpointxyz{0}{1.25}{0}}

\draw [line join=round,line cap=round,line width=0.3pt]
 (0,2.449,0)--++(1.5,0,2.598)++(-1.5,-2.449,0.866)--++(-1.5,0,-2.598)--++(1.5,2.449,-0.866)
 (0,2.449,0)--++(1.5,-2.449,.866)--++(-3,0,0)

(0,2.449,0)++(1.5,-2.449,.866)--++(1.5,0,2.598);

\draw [line width=3pt,white]
 (0,2.449,0)++(1.5,0,2.598)++(.5,-0.816,.289)--++(.5,-0.816,.289)
 (0,2.449,0)++(1.5,0,2.598)++(-.5,-.816,0.289)--++(-.5,-.816,0.289);

\draw [line join=round,line cap=round,line width=0.3pt]
 (0,2.449,0)++(1.5,0,2.598)--++(1.5,-2.449,.866)--++(-3,0,0)
 (0,2.449,0)++(1.5,0,2.598)--++(-1.5,-2.449,0.866);

\pgfpathcircle{\pgfpointxyz{0}{2.449}{0}}{6pt}
\pgfusepath{fill}

\path (-1,4.2) node[]{${\displaystyle{\begin{Bmatrix}x(S_{1})\le1\\x(S_{2})\ge1\\x(S_{2}\cup S_{4})\le2\end{Bmatrix}}}$};

\end{scope}

\\

\begin{scope}[line cap=round,xshift=-.5cm,scale=1,decoration={markings,mark=at position 0.6 with {\arrow{Stealth}}}]

 \path (-.5,1) coordinate(b) node[above]{$3$};
 \path (0.5,1) coordinate(c) node[above]{$2$};
 \path (-.5,0) coordinate(d) node[below]{$1$};
 \path (0.5,0) coordinate(a) node[below]{$4$};

 \foreach \x in {b,c}{
          \draw [thick,postaction={decorate}] (d)--(\x);}

 \draw [thick,postaction={decorate}] (a)--(c);

 \foreach \x in {a,b,c,d}{
          \fill [black](\x) circle (2.2pt);}
\path (0,-.7) node[]{$G_{2}$};

\end{scope}

&

\begin{scope}[line cap=round,xshift=-.5cm,scale=1,decoration={markings,mark=at position 0.65 with {\arrow{Stealth}}}]

 \path (-.5,1) coordinate(b) node[above=0pt]{$3$};
 \path (0.5,1) coordinate(c) node[above=0pt]{$2$};
 \path (-.5,0) coordinate(d) node[below=0pt]{$1$};
 \path (0.5,0) coordinate(a) node[below=0pt]{$4$};

 \draw [thick,postaction={decorate}] (a)--(c);
 \draw [thick,postaction={decorate}] (d)--(b);

 \foreach \x in {a,b,c,d}{
          \fill [black](\x) circle (2.2pt);}
\path (0,-.7) node[]{$G_{2}(1,2)=G_{3}(4,3)$};

\end{scope}

&

\begin{scope}[line cap=round,xshift=-.5cm,scale=1,decoration={markings,mark=at position 0.6 with {\arrow{Stealth}}}]

 \path (-.5,1) coordinate(b) node[above]{$3$};
 \path (0.5,1) coordinate(c) node[above]{$2$};
 \path (-.5,0) coordinate(d) node[below]{$1$};
 \path (0.5,0) coordinate(a) node[below]{$4$};

 \foreach \x in {b,c}{
          \draw [thick,postaction={decorate}] (a)--(\x);}

 \draw [thick,postaction={decorate}] (d)--(b);

 \foreach \x in {a,b,c,d}{
          \fill [black](\x) circle (2.2pt);}
\path (0,-.7) node[]{$G_{3}$};

\end{scope}

\\\hline
\\

\begin{scope}[line cap=round,rotate=0,xshift=0cm,yshift=0cm,scale=\sizea]
\pgfsetxvec{\pgfpointxyz{1.5}{0}{0.5}}
\pgfsetzvec{\pgfpointxyz{-0.2}{0}{1.5}}
\pgfsetyvec{\pgfpointxyz{0}{1.25}{0}}

\draw [line join=round,line cap=round,line width=0.1pt,gray,densely dashed]
 (0,2.449,0)--++(1.5,0,2.598)--++(-1.5,-2.449,0.866)--++(-1.5,0,-2.598);

\draw [line join=round,line cap=round,line width=1.2pt,cyan]
 (0,2.449,0)--++(-1.5,-2.449,.866);

\pgfpathcircle{\pgfpointxyz{0}{2.449}{0}}{6pt}
\pgfusepath{fill}

\path (-1.2,4.2) node[]{${\displaystyle{\begin{Bmatrix}x(S_{1})\le1\\x(S_{3})\ge1\\x(S_{2})=1\\x(S_{4})=1\end{Bmatrix}}}$};

\end{scope}

&

\begin{scope}[line cap=round,rotate=0,xshift=0cm,yshift=0cm,scale=\sizea]
\pgfsetxvec{\pgfpointxyz{1.5}{0}{0.5}}
\pgfsetzvec{\pgfpointxyz{-0.2}{0}{1.5}}
\pgfsetyvec{\pgfpointxyz{0}{1.25}{0}}

\draw [line join=round,line cap=round,line width=0.1pt,gray,densely dashed]
 (0,2.449,0)--++(-1.5,-2.449,0.866)--++(1.5,0,2.598)--++(1.5,2.449,-.866);

\draw [line join=round,line cap=round,line width=1.2pt,cyan]
 (0,2.449,0)--++(1.5,0,2.598);

\pgfpathcircle{\pgfpointxyz{0}{2.449}{0}}{6pt}
\pgfusepath{fill}

\path (-1.2,4.2) node[]{${\displaystyle{\begin{Bmatrix}x(S_{4})\le1\\x(S_{2})\ge1\\x(S_{1})=1\\x(S_{3})=1\end{Bmatrix}}}$};

\end{scope}

&

\begin{scope}[line cap=round,rotate=0,xshift=0cm,yshift=0cm,scale=\sizea]
\pgfsetxvec{\pgfpointxyz{1.5}{0}{0.5}}
\pgfsetzvec{\pgfpointxyz{-0.2}{0}{1.5}}
\pgfsetyvec{\pgfpointxyz{0}{1.25}{0}}

\draw [line join=round,line cap=round,line width=0.1pt,gray,densely dashed]
 (0,2.449,0)--++(1.5,0,2.598)--++(-1.5,-2.449,0.866)--++(-1.5,0,-2.598)--++(1.5,2.449,-0.866);

\pgfpathcircle{\pgfpointxyz{0}{2.449}{0}}{6pt}
\pgfusepath{fill}

\path (-1.2,4.2) node[]{${\displaystyle{\begin{Bmatrix}x(S_{1})=1\\x(S_{2})=1
\\x(S_{3})=1\\x(S_{4})=1\end{Bmatrix}}}$};

\end{scope}

\\

\begin{scope}[line cap=round,xshift=-.5cm,scale=1,decoration={markings,mark=at position 0.65 with {\arrow{Stealth}}}]

 \path (-.5,1) coordinate(b) node[above=0pt]{$3$};
 \path (0.5,1) coordinate(c) node[above=0pt]{$2$};
 \path (-.5,0) coordinate(d) node[below=0pt]{$1$};
 \path (0.5,0) coordinate(a) node[below=0pt]{$4$};

 \draw [thick,postaction={decorate}] (d)--(b);

 \foreach \x in {a,b,c,d}{
          \fill [black](\x) circle (2.2pt);}
\path (0,-1) node[]{$\begin{array}{c}G_{2}(1,2)(4,2)\\\parallel\\G_{3}(4,3)(4,2)\end{array}$};

\end{scope}

&

\begin{scope}[line cap=round,xshift=-.5cm,scale=1,decoration={markings,mark=at position 0.65 with {\arrow{Stealth}}}]

 \path (-.5,1) coordinate(b) node[above=0pt]{$3$};
 \path (0.5,1) coordinate(c) node[above=0pt]{$2$};
 \path (-.5,0) coordinate(d) node[below=0pt]{$1$};
 \path (0.5,0) coordinate(a) node[below=0pt]{$4$};

 \draw [thick,postaction={decorate}] (a)--(c);

 \foreach \x in {a,b,c,d}{
          \fill [black](\x) circle (2.2pt);}
\path (0,-1) node[]{$\begin{array}{c}G_{2}(1,2)(1,3)\\\parallel\\G_{3}(4,3)(1,3)\end{array}$};

\end{scope}

&

\begin{scope}[line cap=round,xshift=-0.5cm,scale=1]
 \path (-.5,1) coordinate(b) node[above=0pt]{$3$};
 \path (0.5,1) coordinate(c) node[above=0pt]{$2$};
 \path (-.5,0) coordinate(d) node[below=0pt]{$1$};
 \path (0.5,0) coordinate(a) node[below=0pt]{$4$};

 \foreach \x in {a,b,c,d}{
          \fill [black](\x) circle (2.2pt);}
\path (0,-1) node[]{$\begin{array}{c}G_{2}(1,2)(1,3)(4,2)\\\parallel\\G_{3}(4,3)(1,3)(4,2)\end{array}$};

\end{scope}

\\
};
\end{tikzpicture}
\par\end{center}
\par\end{centering}
\noindent \centering{}\caption{\label{fig:Blocks1}A matroid subdivision of $\mathrm{P}$ with the
corresponding directed bigraphs, I.}
\end{figure}
\begin{figure}[th]
\noindent \begin{centering}
\noindent \begin{center}
\def\sizea{0.35}
\def\size{0.25}
\def\ratioa{0.901}
\def\ratior{0.851}
\def\ratiow{0.761}
\def\ratioe{0.881}
\colorlet{grey}{black!37}

\begin{tikzpicture}[blowup line/.style={line width=2pt,grey},font=\scriptsize]

\matrix[column sep=0.4cm, row sep=0.8cm]{

&

\begin{scope}[line cap=round,rotate=0,scale=\sizea,xshift=0cm,yshift=0cm]

\pgfsetxvec{\pgfpointxyz{1.5}{0}{0.5}}
\pgfsetzvec{\pgfpointxyz{-0.2}{0}{1.5}}
\pgfsetyvec{\pgfpointxyz{0}{1.25}{0}}

\draw [line join=round,line cap=round,line width=0.2pt]
 (0,2.449,0)--++(-1.5,-2.449,0.866)--++(3,0,0)
 (0,2.449,0)++(-1.5,-2.449,0.866)--++(-1.5,0,2.598);

\draw [line width=3pt,white]
 (0,2.449,0)++(-1.5,0,2.598)--++(3,0,0)--++(1.5,-2.449,0.866);

\draw [line join=round,line cap=round,line width=0.3pt]
 (0,2.449,0)--++(1.5,-2.449,0.866);

\draw [line join=round,line cap=round,line width=0.3pt]
 (0,2.449,0)--++(-1.5,0,2.598)--++(3,0,0)--cycle
 (0,2.449,0)++(-1.5,0,2.598)--++(-1.5,-2.449,0.866)--++(6,0,0)--++(-1.5,2.449,-0.866)++(1.5,-2.449,0.866)--++(-1.5,0,-2.598);

\fill [semitransparent,cyan]
 (0,2.449,0)--++(-1.5,0,2.598)--++(1.5,-2.449,0.866)--++(1.5,0,-2.598)--++(-1.5,2.449,-0.866);

\draw [line join=round,line cap=round,line width=1.2pt,semitransparent,cyan]
 (0,2.449,0)--++(-1.5,0,2.598)--++(1.5,-2.449,0.866)--++(1.5,0,-2.598)--++(-1.5,2.449,-0.866);

\draw [line join=round,line cap=round,line width=0.3pt]
 (0,2.449,0)--++(1.5,0,2.598)--++(-3,0,0)
 (0,2.449,0)++(1.5,0,2.598)--++(1.5,-2.449,0.866);

\pgfpathcircle{\pgfpointxyz{0}{2.449}{0}}{6pt}
\pgfusepath{fill}

\end{scope}

&

\\\hline
\\

\begin{scope}[line cap=round,rotate=0,xshift=0cm,yshift=0cm,scale=\sizea]

\pgfsetxvec{\pgfpointxyz{1.5}{0}{0.5}}
\pgfsetzvec{\pgfpointxyz{-0.2}{0}{1.5}}
\pgfsetyvec{\pgfpointxyz{0}{1.25}{0}}

\draw [line join=round,line cap=round,line width=0.2pt]
 (0,2.449,0)--++(-1.5,-2.449,0.866)--++(3,0,0)
 (0,2.449,0)++(-1.5,-2.449,0.866)--++(-1.5,0,2.598)--++(3,0,0)
 (0,2.449,0)++(-1.5,-2.449,0.866)++(-1.5,0,2.598)--++(1.5,2.449,-0.866);

\draw [line width=3pt,white]
 (0,2.449,0)++(-1.5,0,2.598)
++(0.5,-.816,0.289)--
++(0.5,-.816,0.289);

\draw [line join=round,line cap=round,line width=0.2pt]
 (0,2.449,0)--++(-1.5,0,2.598)--++(1.5,-2.449,0.866)--++(1.5,0,-2.598)--++(-1.5,2.449,-0.866);

\pgfpathcircle{\pgfpointxyz{0}{2.449}{0}}{6pt}
\pgfusepath{fill}

\path (-1.8,4.2) node[]{${\displaystyle{\begin{Bmatrix}x(S_{4})\le1\\x(S_{2})\ge1\\x(S_{1}\cup S_{2})\le2\end{Bmatrix}}}$};

\end{scope}

&

\begin{scope}[line cap=round,rotate=0,xshift=0cm,yshift=0cm,scale=\sizea]
\pgfsetxvec{\pgfpointxyz{1.5}{0}{0.5}}
\pgfsetzvec{\pgfpointxyz{-0.2}{0}{1.5}}
\pgfsetyvec{\pgfpointxyz{0}{1.25}{0}}

\fill [semitransparent,cyan]
 (0,2.449,0)--++(-1.5,0,2.598)--++(1.5,-2.449,0.866)--++(1.5,0,-2.598)--++(-1.5,2.449,-0.866);

\draw [line join=round,line cap=round,line width=1.2pt,semitransparent,cyan]
 (0,2.449,0)--++(-1.5,0,2.598)--++(1.5,-2.449,0.866)--++(1.5,0,-2.598)--++(-1.5,2.449,-0.866);

\pgfpathcircle{\pgfpointxyz{0}{2.449}{0}}{6pt}
\pgfusepath{fill}

\path (-1.2,4.2) node[]{${\displaystyle{\begin{Bmatrix}x(S_{4})\le1\\x(S_{2})\ge1\\x(S_{1}\cup S_{2})=2\end{Bmatrix}=\begin{Bmatrix}x(S_{1})\le1\\x(S_{3})\ge1\\x(S_{3}\cup S_{4})=2\end{Bmatrix}}}$};

\end{scope}

&

\begin{scope}[line cap=round,rotate=0,xshift=0cm,yshift=0cm,scale=\sizea]

\pgfsetxvec{\pgfpointxyz{1.5}{0}{0.5}}
\pgfsetzvec{\pgfpointxyz{-0.2}{0}{1.5}}
\pgfsetyvec{\pgfpointxyz{0}{1.25}{0}}

\draw [line join=round,line cap=round,line width=0.2pt]
 (0,2.449,0)--++(-1.5,0,2.598)--++(1.5,-2.449,0.866)--++(1.5,0,-2.598)--++(-1.5,2.449,-0.866);

\draw [line width=3pt,white]
 (0,2.449,0)++(-1.5,0,2.598)++(3,0,0)++(.5,-.816,0.289)--++(.5,-.816,0.289);

\draw [line join=round,line cap=round,line width=0.3pt]
 (0,2.449,0)++(-1.5,0,2.598)--++(3,0,0)--++(-1.5,0,-2.598)

 (0,2.449,0)++(-1.5,0,2.598)++(1.5,-2.449,0.866)--++(3,0,0)--++(-1.5,0,-2.598)

 (0,2.449,0)++(-1.5,0,2.598)++(3,0,0)--++(1.5,-2.449,0.866);

\pgfpathcircle{\pgfpointxyz{0}{2.449}{0}}{6pt}
\pgfusepath{fill}

\path (-.8,4.2) node[]{${\displaystyle{\begin{Bmatrix}x(S_{1})\le1\\x(S_{3})\ge1\\x(S_{3}\cup S_{4})\le2\end{Bmatrix}}}$};

\end{scope}

\\

\begin{scope}[line cap=round,xshift=-.75cm,scale=1,decoration={markings,mark=at position 0.6 with {\arrow{Stealth}}}]

\path (-.5,1) coordinate(b) node[above]{$2$};
 \path (0.5,1) coordinate(c) node[above]{$3$};
 \path (-.5,0) coordinate(d) node[below]{$1$};
 \path (0.5,0) coordinate(a) node[below]{$4$};

 \foreach \x in {b,c}{
          \draw [thick,postaction={decorate}] (d)--(\x);}

 \draw [thick,postaction={decorate}] (a)--(c);

 \foreach \x in {a,b,c,d}{
          \fill [black](\x) circle (2.2pt);}
\path (0,-.7) node[]{$G'_{2}$};
\end{scope}

&

\begin{scope}[line cap=round,xshift=-.5cm,scale=1,decoration={markings,mark=at position 0.65 with {\arrow{Stealth}}}]

 \path (-.5,1) coordinate(b) node[above=0pt]{$2$};
 \path (0.5,1) coordinate(c) node[above=0pt]{$3$};
 \path (-.5,0) coordinate(d) node[below=0pt]{$1$};
 \path (0.5,0) coordinate(a) node[below=0pt]{$4$};

 \draw [thick,postaction={decorate}] (a)--(c);
 \draw [thick,postaction={decorate}] (d)--(b);

 \foreach \x in {a,b,c,d}{
          \fill [black](\x) circle (2.2pt);}
\path (0,-.7) node[]{$G_{2}'(1,3)=G'_{3}(4,2)$};

\end{scope}

&

\begin{scope}[line cap=round,xshift=-.4cm,scale=1,decoration={markings,mark=at position 0.6 with {\arrow{Stealth}}}]

 \path (-.5,1) coordinate(b) node[above]{$2$};
 \path (0.5,1) coordinate(c) node[above]{$3$};
 \path (-.5,0) coordinate(d) node[below]{$1$};
 \path (0.5,0) coordinate(a) node[below]{$4$};

 \foreach \x in {b,c}{
          \draw [thick,postaction={decorate}] (a)--(\x);}

 \draw [thick,postaction={decorate}] (d)--(b);

 \foreach \x in {a,b,c,d}{
          \fill [black](\x) circle (2.2pt);}
\path (0,-.7) node[]{$G'_{3}$};

\end{scope}

\\\hline
\\

\begin{scope}[line cap=round,rotate=0,xshift=0cm,yshift=0cm,scale=\sizea]
\pgfsetxvec{\pgfpointxyz{1.5}{0}{0.5}}
\pgfsetzvec{\pgfpointxyz{-0.2}{0}{1.5}}
\pgfsetyvec{\pgfpointxyz{0}{1.25}{0}}

\draw [line join=round,line cap=round,line width=0.1pt,gray,densely dashed]
 (0,2.449,0)--++(1.5,-2.449,0.866)
--++(-1.5,0,2.598)--++(-1.5,2.449,-.866);

\draw [line join=round,line cap=round,line width=1.2pt,cyan]
 (0,2.449,0)--++(-1.5,0,2.598);

\pgfpathcircle{\pgfpointxyz{0}{2.449}{0}}{6pt}
\pgfusepath{fill}

\path (-1.2,4.3) node[]{${\displaystyle{\begin{Bmatrix}x(S_{1})\le1\\x(S_{2})\ge1\\x(S_{3})=1\\x(S_{4})=1\end{Bmatrix}}}$};

\end{scope}

&

\begin{scope}[line cap=round,rotate=0,xshift=0cm,yshift=0cm,scale=\sizea]
\pgfsetxvec{\pgfpointxyz{1.5}{0}{0.5}}
\pgfsetzvec{\pgfpointxyz{-0.2}{0}{1.5}}
\pgfsetyvec{\pgfpointxyz{0}{1.25}{0}}

\draw [line join=round,line cap=round,line width=0.1pt,gray,densely dashed]
 (0,2.449,0)--++(-1.5,0,2.598)--++(1.5,-2.449,0.866)--++(1.5,0,-2.598);

\draw [line join=round,line cap=round,line width=1.2pt,cyan]
 (0,2.449,0)--++(1.5,-2.449,0.866);

\pgfpathcircle{\pgfpointxyz{0}{2.449}{0}}{6pt}
\pgfusepath{fill}

\path (-1.2,4.2) node[]{${\displaystyle{\begin{Bmatrix}x(S_{4})\le1\\x(S_{3})\ge1\\x(S_{1})=1\\x(S_{2})=1\end{Bmatrix}}}$};

\end{scope}

&

\begin{scope}[line cap=round,rotate=0,xshift=0cm,yshift=0cm,scale=\sizea]
\pgfsetxvec{\pgfpointxyz{1.5}{0}{0.5}}
\pgfsetzvec{\pgfpointxyz{-0.2}{0}{1.5}}
\pgfsetyvec{\pgfpointxyz{0}{1.25}{0}}

\draw [line join=round,line cap=round,line width=0.1pt,gray,densely dashed]
 (0,2.449,0)--++(-1.5,0,2.598)--++(1.5,-2.449,0.866)--++(1.5,0,-2.598)--++(-1.5,2.449,-0.866);

\pgfpathcircle{\pgfpointxyz{0}{2.449}{0}}{6pt}
\pgfusepath{fill}

\path (-1.2,4.3) node[]{${\displaystyle{\begin{Bmatrix}x(S_{1})=1\\x(S_{2})=1
\\x(S_{3})=1\\x(S_{4})=1\end{Bmatrix}}}$};

\end{scope}

\\

\begin{scope}[line cap=round,xshift=-.5cm,scale=1,decoration={markings,mark=at position 0.65 with {\arrow{Stealth}}}]

 \path (-.5,1) coordinate(b) node[above=0pt]{$2$};
 \path (0.5,1) coordinate(c) node[above=0pt]{$3$};
 \path (-.5,0) coordinate(d) node[below=0pt]{$1$};
 \path (0.5,0) coordinate(a) node[below=0pt]{$4$};

 \draw [thick,postaction={decorate}] (d)--(b);

 \foreach \x in {a,b,c,d}{
          \fill [black](\x) circle (2.2pt);}
\path (0,-1) node[]{$\begin{array}{c}G'_{2}(1,3)(4,3)\\\parallel\\G'_{3}(4,2)(4,3)\end{array}$};

\end{scope}

&

\begin{scope}[line cap=round,xshift=-.5cm,scale=1,decoration={markings,mark=at position 0.65 with {\arrow{Stealth}}}]

 \path (-.5,1) coordinate(b) node[above=0pt]{$2$};
 \path (0.5,1) coordinate(c) node[above=0pt]{$3$};
 \path (-.5,0) coordinate(d) node[below=0pt]{$1$};
 \path (0.5,0) coordinate(a) node[below=0pt]{$4$};

 \draw [thick,postaction={decorate}] (a)--(c);

 \foreach \x in {a,b,c,d}{
          \fill [black](\x) circle (2.2pt);}
\path (0,-1) node[]{$\begin{array}{c}G'_{2}(1,3)(1,2)\\\parallel\\G'_{3}(4,2)(1,2)\end{array}$};

\end{scope}

&

\begin{scope}[line cap=round,xshift=-0.5cm,scale=1]
 \path (-.5,1) coordinate(b) node[above=0pt]{$2$};
 \path (0.5,1) coordinate(c) node[above=0pt]{$3$};
 \path (-.5,0) coordinate(d) node[below=0pt]{$1$};
 \path (0.5,0) coordinate(a) node[below=0pt]{$4$};

 \foreach \x in {a,b,c,d}{
          \fill [black](\x) circle (2.2pt);}
\path (0,-1) node[]{$\begin{array}{c}G'_{2}(1,3)(1,2)(4,3)\\\parallel\\G'_{3}(1,2)(4,2)(4,3)\end{array}$};

\end{scope}

\\
};
\end{tikzpicture}
\par\end{center}
\par\end{centering}
\noindent \centering{}\caption{\label{fig:Blocks2}A matroid subdivision of $\mathrm{P}$ with the
corresponding directed bigraphs, II.}
\end{figure}

\appendix

\section{\label{sec:Preliminaries}Definitions and Lemmas}

Throughout the section, $M$ is a matroid on a finite set $S$ with
rank function $r$.\smallskip{}

A pair $\left\{ A,B\right\} $ of subsets of $S$ is called a \textbf{modular
pair} of $M$ if equality holds in the submodular inequality $r\left(A\right)+r\left(B\right)\ge r\left(A\cup B\right)+r\left(A\cap B\right)$,
that is:
\[
r\left(A\right)+r\left(B\right)=r\left(A\cup B\right)+r\left(A\cap B\right).
\]

A subset $A\subseteq S$ is called a \textbf{separator} of $M$ if
$\left\{ A,S-A\right\} $ is a modular pair. Both $S$ and $\emptyset$
are separators which are called \textbf{trivial} separators. Then,
$M$ and its dual matroid $M^{\ast}$ have the same set of separators.\smallskip{}

The matroid $M$ is called \textbf{connected} if it has no nontrivial
separators, and \textbf{disconnected} otherwise. A subset $A\subseteq S$
is called \textbf{connected} if the restriction matroid $M|_{A}$
is, and \textbf{disconnected} otherwise.\smallskip{}

We denote by $\kappa\left(M\right)$ the number of all nonempty inclusionwise
minimal separators of $M$ where the inclusion is set inclusion. Let
$S_{1},\dots,S_{\kappa\left(M\right)}$ be all nonempty inclusionwise
minimal separators of $M$, then:
\[
M=M|_{S_{1}}\oplus\cdots\oplus M|_{S_{\kappa\left(M\right)}}.
\]
 Here, $M|_{S_{i}}$ are called the \textbf{connected components}
of $M$.\smallskip{}

For a subset $A\subseteq S$, we denote
\begin{equation}
M\left(A\right):=M|_{A}\oplus M/A.\label{eq:paren}
\end{equation}

The subset $A$ is called \textbf{non-degenerate}\footnote{The definition of non-degenerate subsets was originally given in \cite{GS87}
for connected matroids, and generalized to the current form in \cite{j-hope}.} if 
\[
\kappa\left(M\left(A\right)\right)=\kappa\left(M\right)+1
\]
 and \textbf{degenerate} otherwise. Every separator is degenerate.
If $A$ is a non-degenerate subset of $M$, then $S-A$ is a non-degenerate
subset of its dual matroid $M^{\ast}$.\smallskip{}

If $M$ is a disconnected matroid, there can be different non-degenerate
subsets $A_{1},\dots,A_{m}$ with $M\left(A_{1}\right)=\cdots=M\left(A_{m}\right)$,
but there exists the smallest such.\smallskip{}

The \textbf{indicator vector} of $A\subseteq S$ is the vector $\mathbf{v}\in\mathbb{R}^{S}$
such that $x_{i}\left(\mathbf{v}\right)$ is $1$ if $i\in A$ and
$0$ otherwise, which we denote by $1^{A}$.\smallskip{}

The set of the bases of $M$ is denoted by $\mathcal{B}\left(M\right)$.
The \textbf{matroid base polytope} or simply the \textbf{base polytope}
of $M$ is the convex hull of all indicator vectors $1^{B}$ of $B\in\mathcal{B}\left(M\right)$,
which we denote by $\mathrm{BP}_{M}$. Its dimension is $\dim\mathrm{BP}_{M}=\left|S\right|-\kappa\left(M\right)$.
Using inequalities, $\mathrm{BP}_{M}$ is written as 
\[
\mathrm{BP}_{M}=\left\{ x_{i}\ge0:i\in S\right\} \cap\left\{ x\left(A\right)\le r\left(A\right):A\in2^{S}\right\} \cap\left\{ x\left(S\right)=r\left(S\right)\right\} .
\]
 The correspondence between matroids and base polytopes is one-to-one.
The base polytope $\mathrm{BP}_{M}$ is full-dimensional if and only
if $M$ is connected.\smallskip{}

A \textbf{face matroid} of $M$ is the matroid of a face of $\mathrm{BP}_{M}$.\smallskip{}

Two polytopes are called \textbf{face-fitting} if their intersection
is a common face of both, empty or not.\smallskip{}

A $\left(k,S\right)$-\textbf{tiling} or simply a \textbf{tiling}
is a finite face-fitting collection of polytopes in $\Delta_{S}^{k}$
that is connected in codimension $1$. The \textbf{support} $\left|\Sigma\right|$
of a tiling $\Sigma$ is the union of its members. The \textbf{dimension}
of $\Sigma$ is the dimension of $\left|\Sigma\right|$. Throughout
the paper, a tiling is assumed equidimensional, i.e. all of its members
have the same dimension. A tiling induced by a convex or concave function
is called \textbf{regular}.\smallskip{}

When mentioning \emph{cells} of $\Sigma$, we identify $\Sigma$ with
the polyhedral complex that its polytopes generate with intersections.
A nonempty cell of $\Sigma$ is called a \textbf{common cell} if it
is a face of all members of $\Sigma$.\smallskip{}

A \textbf{matroid tiling} is a tiling whose members are base polytopes,
which is well defined because every face of a base polytope is again
a base polytope.\smallskip{}

A \textbf{matroid subdivision} is a matroid tiling whose support is
a base polytope.\smallskip{}

The \textbf{base intersection} of two matroids $M_{1}$ and $M_{2}$
is the intersection of the base collections of $M_{1}$ and $M_{2}$,
which we denote by 
\[
M_{1}\cap M_{2}.
\]
 When $M_{1}\cap M_{2}$ is the base collection of a matroid, by abuse
of notation, we denote the matroid by $M_{1}\cap M_{2}$.\smallskip{}

For a subcollection $\mathcal{A}$ of the power set $2^{S}$ of $S$,
let $\mathrm{P}_{\mathcal{A}}$ be the convex hull of the indicator
vectors $1^{A}$ for all $A\in\mathcal{A}$. Then:
\[
\mathrm{BP}_{M_{1}}\cap\mathrm{BP}_{M_{2}}=\mathrm{P}_{M_{1}\cap M_{2}}.
\]

\begin{lem}
\label{lem:GMHA}Let $M$ be a rank-$k$ matroid on $S$ with rank
function $r$.
\begin{enumerate}[itemsep=3pt]
\item \label{enu:inseparable}\cite[Lemma 4.2]{j-hope} If there is a subset
$A\subseteq S$ of size $k+1$ with $M|_{A}=U_{A}^{k}$, then $M\backslash\overline{\emptyset}_{M}$
is a connected matroid where $\overline{\emptyset}_{M}$ denotes the
set of loops of $M$.
\item \label{enu:inequalities}\cite[Lemma 2.30]{j-hope} Suppose $M=M|_{S_{1}}\oplus\cdots\oplus M|_{S_{\kappa\left(M\right)}}$
is loopless. Then, its base polytope $\mathrm{BP}_{M}$ is determined
by $\kappa\left(M\right)$ equations $x\left(S_{i}\right)=r\left(S_{i}\right)$
and the following inequalities:
\[
\ensuremath{\begin{cases}
x_{i}\ge0 & \mbox{ for all }i\in S,\\
x\left(F\right)\le r\left(F\right) & \mbox{ for all minimal non-degenerate flats }F\mbox{ of }M.
\end{cases}}
\]
\item \label{enu:mod-pair}\cite[Lemma 2.24]{j-hope} The pair $\left\{ F,L\right\} $
of subsets of $S$ is modular if and only if $M\left(F\right)\cap M\left(L\right)\neq\emptyset$.
\item \label{enu:codim-2}\cite[Lemma 2.38]{j-hope} Suppose $M$ is connected
with $k\ge3$. Let $\left\{ F,L\right\} $ be a modular pair of non-degenerate
flats with $\mathrm{codim}\,\mathrm{BP}_{M\left(F\right)\cap M\left(L\right)}=2$.
Then, precisely one of the following $4$ cases happens.\vspace{0.6em}

\noindent %
\begin{tabular}{c|c}
\hline 
\scalebox{0.9}{$F\cap L=\emptyset$} & \scalebox{0.9}{$M\left(F\right)\cap M\left(L\right)=M\left(F\cup L\right)$ \, with \, $M|_{F\cup L}= M|_{F}\oplus M|_{L}$}\tabularnewline[\doublerulesep]
\hline 
\scalebox{0.9}{$F\cup L=S$} & \scalebox{0.9}{$M\left(F\right)\cap M\left(L\right)=M\left(F\cap L\right)$ \, with \, $M/\left(F\cap L\right)=M/F\oplus M/L$}\tabularnewline[\doublerulesep]
\hline 
\scalebox{0.9}{$F\supsetneq L$} & \scalebox{0.9}{$M\left(F\right)\cap M\left(L\right)=M/F\oplus M|_F/L\oplus M|_L$}\tabularnewline[\doublerulesep]
\hline 
\scalebox{0.9}{$F\subsetneq L$} & \scalebox{0.9}{$M\left(F\right)\cap M\left(L\right)=M/L\oplus M|_L/F\oplus M|_F$}\tabularnewline[\doublerulesep]
\hline 
\end{tabular}\vspace{1em}

\end{enumerate}
\end{lem}

\end{document}